\title[Two-phase flow with singular energy]{Construction of a two-phase flow with singular energy by gradient flow methods}
\author{Cl\'ement Canc\`es}
\address{Inria, Univ. Lille, CNRS, UMR 8524 - Laboratoire Paul Painlev\'e, F-59000 Lille (\href{mailto:clement.cances@inria.fr}{\tt clement.cances@inria.fr})}
\author{Daniel Matthes}
\address{Zentrum f\"ur Mathematik, Technische Universit\"at M\"unchen,
  85747 Garching, Germany (\href{mailto:matthes@ma.tum.de}{\tt
    matthes@ma.tum.de})}
\thanks{This research was supported by the DFG Collaborative Research Center TRR 109, ‘Discretization in Geometry and Dynamics.’}
\newtheorem{thm}{Theorem}
\newtheorem{prp}{Proposition}
\newtheorem{lem}{Lemma}
\newtheorem{cor}{Corollary}
\newtheorem{asm}{Assumption}
\newtheorem{rmk}{Remark}
\newcommand{\wass}{\mathbf W}
\newcommand{\altnrg}{\mathcal E}
\newcommand{\nrg}{\mathbf E}
\newcommand{\nrgone}{\mathbf E_1}
\newcommand{\ent}{\mathbf H}
\newcommand{\spc}{{{\mathbf X}_\text{mass}}}
\newcommand{\dst}{\mathbf d}
\newcommand{\setR}{{\mathbb R}}
\newcommand{\setN}{{\mathbb N}}
\newcommand{\dv}{\operatorname{div}}
\newcommand{\id}{\mathrm{id}}
\newcommand{\dd}{{\,\mathrm d}}
\newcommand{\dn}{{\mathrm d}}
\newcommand{\nml}{\mathbf{n}}
\newcommand{\eins}{\mathbf{1}}
\newcommand{\devil}{\mathbb{I}}
\newcommand{\eps}{\varepsilon}
\newcommand{\tr}{\mathrm{tr}}
\newcommand{\loc}{\text{loc}}
\newcommand{\bc}{{\boldsymbol{c}}}
\newcommand{\bq}{{\boldsymbol{q}}}
\newcommand{\bmu}{{\boldsymbol{\mu}}}
\newcommand{\err}{\boldsymbol{\epsilon}}
\newcommand{\brho}{\boldsymbol{\rho}}
\newcommand{\smallhalf}{\textstyle{\frac12}}
\newcommand{\Topt}{T_\text{opt}}
\newcommand{\opt}{\text{opt}}
\newcommand{\lbg}{{\mathcal L}}
\newcommand{\nonl}{{\mathfrak F}}
\begin{document}

\begin{abstract}
  We prove the existence of weak solutions to a system of two diffusion equations
  that are coupled by a pointwise volume constraint.
  The time evolution is given by gradient dynamics for a free energy functional.
  Our primary example is a model for the demixing of polymers,
  the corresponding energy is the one of Flory, Huggins and deGennes.
  Due to the non-locality in the equations, the dynamics considered here is qualitatively different
  from the one found in the formally related Cahn-Hilliard equations.
  
  Our angle of attack is from the theory of optimal mass transport,
  that is, we consider the evolution equations for the two components
  as two gradient flows in the Wasserstein distance with one joint energy functional
  that has the volume constraint built in.
  The main difference to our previous work \cite{CMN18} is the nonlinearity
  of the energy density in the gradient part,
  which becomes singular at the interface between pure and mixed phases.
\end{abstract}

\maketitle

\section{Introduction}
We show existence of non-negative solutions to the following coupled system of diffusion equations:
\begin{subequations}
  \label{eq:eqs}
  \begin{align}
    \label{eq:eqcont1}
    \partial_tc_1 &= \dv(m_1 c_1\nabla\mu_1), \\
    \label{eq:eqcont2}
    \partial_tc_2 &= \dv(m_2 c_2\nabla\mu_2), \\
    \label{eq:eqcons}
    c_1+c_2 &= 1,\\
    \label{eq:eqlagr}
    \mu_1-\mu_2 &= {-}f'(c_1)\Delta f(c_1) +  \chi {\textstyle{\left(\frac12 -  c_1\right)}},
  \end{align}
\end{subequations}
on a bounded and convex domain $\Omega\subset\setR^d$
in the plane ($d=2$) or physical space ($d=3$)
with smooth boundary $\partial\Omega$.
Solutions are subject to no-flux and homogeneous Neumann boundary conditions
\begin{subequations}
  \label{eq:bc}
  \begin{align}
    \label{eq:noflux}
    \nml\cdot(c_1\nabla\mu_1)=\nml\cdot(c_2\nabla\mu_2)=0, \\
    \label{eq:neumann}
    \nml\cdot\nabla c_1=\nml\cdot\nabla c_2=0
  \end{align}
\end{subequations}
on $\partial\Omega$ and to the initial conditions
\begin{align}
  \label{eq:ic}
  c_1(0)=c_1^0,\quad c_2(0)=c_2^0,
\end{align}
with initial data $c_1^0,c_2^0:\Omega\to[0,1]$ satisfying the constraint \eqref{eq:eqcons}.
The mobility coefficients $m_1,m_2>0$ and the parameter $\chi>0$ are given constants,
and the function $f:[0,1]\to\setR$ in \eqref{eq:eqlagr} is assumed to satisfy:
\begin{asm}
  \label{asm}
  $f$ is continuous on $[0,1]$,
  it is smooth on $(0,1)$ with $f'(r)>0$ there,
  it satisfies $f'(r)\to+\infty$ for $r\downarrow0$ and for $r\uparrow1$,
  and the function $1/(f')^2$ is concave on $(0,1)$.
  Moreover, $f(r)$ is point-symmetric about $r=1/2$, i.e., $f(1-r)=-f(r)$ for all $r\in[0,1]$.
\end{asm}
Systems of the type \eqref{eq:eqs} are widely used as models for spinodal decomposition.
Particularly, the choice \eqref{eq:degenes} of $f$ below describes the demixing of two polymers,
see e.g. \cite{deGennes80, EPM97, OE97}.

An $f$ satisfying Assumption \ref{asm} is \emph{singular} in the sense that
it has infinite slope at the boundary of $[0,1]$.
It is this behaviour which makes the analysis of the problem at hand significantly more challenging
than the corresponding Cahn-Hilliard problem with $f(r)=r-\frac12$
that the authors have considered recently with Nabet \cite{CMN18}.
In the current paper, the example of primary interest is
\begin{align}
  \label{eq:degenes}
  f(r)=\arcsin(2r-1),
  \quad\text{with}\quad
  \frac1{f'(r)^2}= r(1-r).
\end{align}
An alternative admissible choice for $f$ is $f(r)=r^\gamma-(1-r)^\gamma$ with $\frac12\le\gamma<1$.
Note that these functions interpolate between
the linear function $f(r)=2r-1$ at $\gamma\uparrow 1$, corresponding to the Cahn-Hilliard model,
and a function with square-root singularities like in \eqref{eq:degenes} at $\gamma=\frac12$.

The role of $f$ is best understood as follows:
there is a dissipated free energy functional for \eqref{eq:eqs}, which is given by
\begin{align}
  \label{eq:altnrg}
  \altnrg(c_1,c_2) = \frac14\int_\Omega \big( |\nabla f(c_1)|^2+|\nabla f(c_2)|^2 + 2\chi c_1c_2\big)\dd x.
\end{align}
Assumption \ref{asm} guarantees that the gradient parts, i.e.,
\begin{align*}
  c_i\mapsto \int_\Omega |\nabla f(c_i)|^2\dd x,
\end{align*}
are convex functionals.
Consequently, $\altnrg$ is of the form ``convex plus smooth''.
With the choice \eqref{eq:degenes}, $\altnrg$ is referred to as Flory-Huggins-deGennes-energy.

We remark that thermal agitation effects can be incorporated into the model
by augmenting the energy~\eqref{eq:altnrg} with the mixing entropy 
\[
\theta \int_\Omega \big( c_1 \log c_1 + c_2 \log c_2 \big) \dd x, \qquad \theta \geq 0. 
\]
Here we are concerned solely with the so-called deep-quench limit $\theta=0$,
which is analytically the most challenging case.
Indeed, thermal effects introduce additional diffusion to the problem which provide more regularity.

\subsection{Local versus non-local dynamics}
In dimensions $d>1$, there is a subtle difference between the ``non-local'' model under consideration here
and its ``local'' reduction in the sense of de Gennes \cite{deGennes80}.
That difference, and its consequences on the long time asymptotics of solutions, have been discussed in detail in \cite{OE97}.
For the local model, one strengthens the constraint \eqref{eq:eqcons} by requiring
annihilation of the fluxes of $c_1$ and $c_2$ (and not only the divergences of these fluxes),
i.e.,
\begin{align}
  \label{eq:CH}
  m_1c_1\nabla\mu_1 + m_2c_2\nabla\mu_2 = 0.
\end{align}
This condition is stronger than the original constraint \eqref{eq:eqcons} in the sense that
the system consisting of \eqref{eq:eqcont1}, \eqref{eq:eqcont2}, \eqref{eq:eqlagr}, and \eqref{eq:CH} 
propagates \eqref{eq:eqcons} in time.
Moreover, \eqref{eq:CH} allows to eliminate $\mu_2$ from \eqref{eq:eqlagr},
and the system then becomes equivalent to one single evolution equation of fourth order for $c_1$;
in the case $m_1=m_2=1$, it reads
\begin{align}
  \label{eq:oCH}
  \partial_tc_1 = -\dv\big(c_1(1-c_1)\,\nabla\big[f'(c_1)\Delta f(c_1) + \chi (c_1-\smallhalf)\big]\big). 
\end{align}
There seems to be no way to reduce the original system \eqref{eq:eqs}
to a single differential equation in a similar fashion.
The reduction that comes closest to \eqref{eq:oCH} --- still in the case $m_1=m_2=1$ ---
is the following non-local equation, taken from \cite{OE97},
\begin{align}
  \label{eq:reduced}
  \partial_tc_1 = -\dv\big(c_1\mathbf{P}\big\{(1-c_1)\,\nabla\big[f'(c_1)\Delta f(c_1)+ \chi (c_1-\smallhalf)\big]\big\}\big),
\end{align}
in which $\mathbf{P}$ is the Helmholtz projection onto the gradient vector fields.
More explicitly, one combines \eqref{eq:eqcont1} with the following elliptic equation for $\mu_1$:
\begin{align}
  \label{eq:reduced2}
  -\Delta\mu_1 = \dv\big( (1-c_1)\nabla\big[f'(c_1)\Delta f(c_1) + \chi (c_1-\smallhalf)\big]\big),
\end{align}
which is easily derived by adding \eqref{eq:eqcont1} and \eqref{eq:eqcont2},
and using that $\partial_t(c_1+c_2)=0$ because of \eqref{eq:eqcons}.
Despite all the advantages that the reduced equation \eqref{eq:reduced} might have,
the original two-component formulation \eqref{eq:eqs} is the significant one for our existence analysis.

The less restrictive constraint~\eqref{eq:eqcons} provides more flexibility for the fluxes than~\eqref{eq:CH}.
This effect is measurable on the level of energy decay,
which is significantly faster in the non-local model~\eqref{eq:reduced} than in the local model~\eqref{eq:oCH}. 
Numerical evidence of this fact has been presented in~\cite{CN_FVCA, CMN18} in the Cahn-Hilliard case.
On the theoretical side, 
the dynamics of \eqref{eq:oCH} and of \eqref{eq:reduced} have been compared in \cite{OE97} in the sharp interface limit:
this is where $\chi$ is large and the considered time scale is proportional to $\chi$.
Then the values of the solution $c_1$ are concentrated around zero and one,
and the interfaces in between these pure phases become sharper the larger $\chi$ is.
It turns out that the long-time asymptotics of the interfaces in \eqref{eq:oCH} and in \eqref{eq:reduced} are different:
while \eqref{eq:oCH} is asymptotically equivalent to (the slower) surface diffusion,
\eqref{eq:reduced} leads to (the faster) Hele-Shaw flow.
We refer to~\cite{JKM_arXiv} for a recent mathematical study of the interface dynamics
inside the framework of optimal mass transport.

\subsection{Gradient flow structure}\label{ssct:GF}
Similarly as in our recent paper \cite{CMN18},
we take the interpretation of \eqref{eq:eqs} as a metric gradient flow as starting point for the existence analysis.
More specifially,
we use the gradient flow structure to construct time-discrete approximations of the true solution $\bc$
by means of the minimizing movement scheme,
derive a priori estimates on the approximation  by variational methods,
and finally pass to the time-continuous limit.
We emphasize that the interpretation of \eqref{eq:eqs} as gradient flow motivates the aforementioned procedure,
but we are not going to verify that solutions to \eqref{eq:eqs} are curves of steepest descent in a rigorous way.

The potential $\nrg$ of the flow under consideration is essentially the system's free energy $\altnrg$ from \eqref{eq:altnrg},
however, modified such that the volume constraint \eqref{eq:eqcons} is built in:
\begin{align}
  \label{eq:nrg}
  \nrg(\bc) = \nrgone(c_1)+\devil_{c_1+c_2\equiv1}(\bc),
  \quad
  \nrgone(c_1) = \frac12\int_\Omega |\nabla f(c_1)|^2\dd x + \frac\chi2\int_\Omega c_1(1-c_1)\dd x.
\end{align}
Above, $\devil_{c_1+c_2\equiv1}$ denotes the indicator function that is zero if the constraint $c_1+c_2\equiv1$ is satisfied,
and is $+\infty$ otherwise.
$\nrg$'s ``gradient'' is calculated with respect to a metric $\dst$
that combines the squared $L^2$-Wasserstein distances of the components $c_1$ and $c_2$.
More specifically, on the space
\begin{align}
  \label{eq:spc}
  \spc:=\left\{\bc:\Omega\to[0,1]^2\ \middle|
  \ \fint_\Omega c_1\dd x = \rho_1, \; \fint_\Omega c_2\dd x = \rho_2\right\},
  \quad\text{with}\quad
  \rho_1=\fint_\Omega c_1^0\dd x=1-\rho_2,
\end{align}
we introduce the metric $\dst$ by (see Section \ref{sct:prelim} below for the definition of $\wass$)
\begin{align}
  \label{eq:dst}
  \dst\big(\hat\bc,\check\bc\big)^2
  = \frac{\wass(\hat c_1,\check c_1)^2}{m_1} + \frac{\wass(\hat c_2,\check c_2)^2}{m_2}.
\end{align}
In the eyes of the metric $\dst$, the two components of $\bc $ are independent,
and the constraint $c_1+c_2\equiv1$ is enforced only by means of the energy.
This way, the metric $\dst$ inherits all of the established properties of the $L^2$-Wasserstein distance.
In comparision, to the best of our knowledge, very little is known about the metric
that would result by including the constraint already in its definition; see, however, \cite{BBG04}.

\subsection{Estimates}
There are three essential a priori estimates that play a role in our existence proof for \eqref{eq:eqs}.
The first two are consequences of the gradient flow structure outlined above:
first, the energy is non-increasing in time,
and in particular, $\nrg(\bc(t))\le\nrg(\bc^0)$ for each $t\ge0$.
This ensures validity of the constraint~\eqref{eq:eqcons},
and provides a priori estimates of $c_i$ and $f(c_i)$ in $L^\infty(0,T;H^1(\Omega))$.
Second, the curve $\bc$ is $L^2$-absolutely continuous in time with respect to $\dst$,
that is, both components $c_i$ are absolutely continuous in $\wass$.
That means that the kinetic energy densities $\frac{m_i}2c_i|\nabla\mu_i|^2$
--- see the continuity equations \eqref{eq:eqcont1}\&\eqref{eq:eqcont2} ---
are integrable in space and time.
This provides a priori estimate on $\sqrt{c_i}\nabla\mu_i$ in $L^2(\Omega_T)$.

The third estimate is related to the dissipation of an auxiliary functional,
namely the entropy:
\begin{align}
  \label{eq:ent}
  \ent(\bc) = \frac{\tilde\ent(c_1)}{m_1} + \frac{\tilde\ent(c_2)}{m_2},
  \quad\text{where}\quad
  \tilde\ent(c_i)=\int_\Omega c_i(\log c_i-1)+1\dd x.
\end{align}
Indeed, it follows from a formal calculation given below in \eqref{eq:entdiss} 
that $\ent$'s dissipation can be estimated in the form
\begin{align}
  \label{eq:formalcrucial}
  -\frac{\dn}{\dd t}\ent(\bc) \ge \frac1{2d}\int_\Omega \big(\Delta f(c_1)\big)^2\dd x - M,
\end{align}
with some constant $M\ge0$ that is independent of the specific solution $\bc$.
This provides an a priori estimate on $f(c_1)$ in $L^2_\loc(\setR_{>0};H^2(\Omega))$,
which is our main source of compactness.

\subsection{Reformulation of the equations}
\label{ssct:q}
A key element in our existence analysis is a very particular weak formulation of the system \eqref{eq:eqs},
which is taylored to the special nonlinearity under consideration.
In the Cahn-Hilliard case, where $f$ is smooth up to the boundary,
it is possible to define  a proper notion of phase chemical potential $\mu_i$
even when the corresponding phase vanishes, $c_i = 0$, see~\cite{CMN18}.
This approach does not extend easily to the case of singular $f$'s considered here.
Our ansatz is to substitute the bare potentials $\mu_1$ and $\mu_2$,
which are difficult to analyze, by auxiliary quantites $q_1$ and $q_2$ given in \eqref{eq:q1} below.

Some notation is needed:
by Assumption \ref{asm} on $f$, there exists a continuous $\omega:[0,1]\to\setR$ with $\omega(0) = 0$
that is smooth and positive on $(0,1]$ such that
\begin{align}
  \label{eq:8}
  \frac1{f'(r)}=\omega(r)\omega(1-r) \quad\text{for $0<r<1$}.
\end{align}
For notational convenience, we further introduce the continuous function $\alpha:[0,1]\to\setR$
with $\alpha(0)=0$ and $\alpha(r)=r/\omega(r)$ for $r\in(0,1]$;
continuity at $r=0$ is a consequence
of the assumed concavity of $r\mapsto\frac1{f'(r)^2}=\omega(r)^2\omega(1-r)^2$.
For $f$ from \eqref{eq:degenes}, one may choose $\omega(r)=\sqrt{r}$,
and then finds that $\alpha(r)=\sqrt r$ as well.

The auxiliary quantities that replace $\mu_1$ and $\mu_2$ are
\begin{align}
  \label{eq:q1}
  q_1 = \omega(c_1)\,\mu_1,
  \quad
  q_2 = \omega(c_2)\,\mu_2.
\end{align}
The $q_i$ are much better behaved than the $\mu_i$,
since they vanish by definition when $c_i$ does since $\omega(0) = 0$.
Accordingly, the continuity equation \eqref{eq:eqcont1} is interpreted in the following way:
\begin{equation}
  \label{eq:2weak}
  \begin{split}
    \partial_tc_1 &= \dv\left(m_1c_1\nabla\left[\frac{q_1}{\omega(c_1)}\right]\right)
    = m_1\dv\left(\nabla\left[c_1\frac{q_1}{\omega(c_1)}\right]-\nabla c_1\,\frac{q_1}{\omega(c_1)}\right) \\
    &= m_1\dv\big(\nabla[\alpha(c_1)q_1]-\omega(c_2)\nabla f(c_1)\,q_1\big),
  \end{split}
\end{equation}
and similarly for \eqref{eq:eqcont2}.
Concerning the constitutive equation \eqref{eq:eqlagr}:
after multiplication by $1/f'(c_1)$, it can be reformulated in in terms of the $q_i$ as
\begin{align}
  \label{eq:wkconstr}
  \omega(c_1)q_2-\omega(c_2)q_1 = \nonl[c_1]:=\Delta f(c_1) + \chi\omega(c_1)\omega(c_2)\left(c_1-\smallhalf\right),
\end{align}
which makes perfectly sense in view of the $L^2(\Omega_T)$-regularity of $\Delta f(c_1)$. 

The significance of the formulation \eqref{eq:2weak} is that the right-hand side
can be interpreted in the sense of distributions as soon the product $q_1\nabla f(c_1)$ is well-defined.
Since $f(c_1)\in L^2(0,T;H^2(\Omega))\cap L^\infty(0,T;H^1(\Omega))$ thanks to the a priori estimates,
we have $\nabla f(c_1)\in L^3(\Omega_T)$ by interpolation (recall that $d\le3$),
and so it is sufficient that $q_1\in L^{3/2}(\Omega_T)$.
That latter is deduced by means of the representation 
\begin{equation}
  \label{eq:q1alt}
  q_1=\omega(c_1)\bar\mu + \alpha(c_2)\nonl[c_1],
\end{equation}
in which $\bar\mu = c_1\mu_1+c_2\mu_2 = \alpha(c_1)q_1+\alpha(c_2)q_2$ is an average chemical potential. 
The quantity $\nonl[c_1]$ is bounded in $L^2(\Omega_T)$ thanks to the main a priori estimate;
a bound on $\bar\mu$ is obtained from the following representation of $\bar\mu$'s gradient:
\begin{align}
  \nabla\bar\mu
  = c_1\nabla\mu_1+c_2\nabla\mu_2 + \nabla c_1(\mu_1-\mu_2) 
  \label{eq:nablabarmu}
  = \sqrt{c_1}\big(\sqrt{c_1}\nabla\mu_1\big) + \sqrt{c_2}\big(\sqrt{c_2}\nabla\mu_2\big) + \nabla f(c_1)\,\nonl[c_1],
\end{align}
in which the first two terms are controlled thanks to the $L^2(\Omega_T)$-bound on $\sqrt{c_i}\nabla\mu_i$,
and the last term is controlled by a combination of the $L^\infty(0,T;H^1(\Omega))$-bound on $f(c_1)$
and the $L^2(\Omega_T)$-bound on $\Delta f(c_1)$.
This provides an estimate of $\bar\mu$ in $L^2(0,T;W^{1,1}(\Omega))\hookrightarrow L^{3/2}(\Omega_T)$,
and thus also the desired bound on $q_i$ via~\eqref{eq:q1alt}.


\subsection{Main result}
In the following, $C^\infty_{c,n}(\setR_{>0}\times\Omega)$ denotes the space
of all test functions $\xi\in C^\infty(\setR_{\ge0}\times\overline\Omega)$
such that $\xi(t,\cdot)\equiv0$ for all $t\ge0$ outside of some compact time interval $I\subset\setR_{>0}$,
and for which $\xi(t;\cdot)$ satisfies homogeneous Neumann boundary conditions at each $t>0$.

Our main result is the following.
\begin{thm}
  \label{thm:main}
  Let initial data $\bc^0=(c_1^0,c_2^0)$
  with $c_1^0+c_2^0\equiv1$ and $f(c_1^0),f(c_2^0)\in H^1(\Omega)$ be given.
  Then there exists $\bc=(c_1,c_2):\setR_{\ge0}\times\Omega\to[0,1]^2$
  with the following properties:
  \begin{itemize}
  \item \emph{regularity in time:}
    $c_1,c_2$ are H\"older continuous with respect to time as a map into $L^2(\Omega)$.
  \item \emph{regularity in space:}
    $c_1,c_2,f(c_1),\,f(c_2)\in L^\infty(\setR_{\ge0};H^1(\Omega))$
    and $f(c_1),\,f(c_2)\in L^2_\loc(\setR_{\ge0};H^2(\Omega))$
  \item \emph{boundary conditions:}
    $c_1(t),c_2(t)$ satisfy the homogenous Neumann conditions \eqref{eq:neumann} at a.e.\ $t\ge0$
  \item \emph{initial conditions:}
    $c_1(0)=c_1^0$, $c_2(0)=c_2^0$.
  \end{itemize}
  $\bc$ is accompanied by $\bq=(q_1,q_2):\setR_{\ge0}\times\Omega\to\setR^2$
  with $q_1,q_2\in L^{3/2}(\Omega_T)$ for each $T>0$,
  such that the system \eqref{eq:eqs} is satisfied in the following sense:
  \begin{subequations}
    \label{eq:wf}
    \begin{align}
      \label{eq:wfcont}
      &0=\int_0^\infty\int_\Omega \left[\partial_t\xi\,c_i
        +m_iq_i\big(\alpha(c_i)\, \Delta\xi
        +\omega(1-c_i)\,\nabla f(c_i)\cdot\nabla\xi\big)
        \right] \dd x \dd t
      \\
      \nonumber
      &\qquad \text{for $i=1,2$ and all test functions $\xi\in C^\infty_{c,n}(\setR_{>0}\times\Omega)$}, \\ 
      &1=c_1+c_2 \label{eq:c1+c2=1_cont}
        \qquad \text{a.e. on $\setR_{\ge0}\times\Omega$},\\
      &\omega(c_1)\,q_2 - \omega(c_2)\,q_1 = \Delta f(c_1) + \frac\chi2 \textstyle{\left(c_1-c_2\right)}\omega(c_1)\omega(c_2)
        \qquad \text{a.e. on $\setR_{\ge0}\times\Omega$}.
        \label{eq:diff_pot}
    \end{align}
  \end{subequations}
\end{thm}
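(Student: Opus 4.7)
The plan is to construct $\bc$ as the limit of a sequence of time-discrete approximations produced by the minimizing movement scheme associated to the gradient flow structure $(\nrg,\dst)$ from Section \ref{ssct:GF}. Given a time step $\tau>0$, set $\bc^0_\tau = \bc^0$ and define $\bc^n_\tau \in \spc$ inductively as a minimizer of $\bc \mapsto \nrg(\bc) + \frac{1}{2\tau}\dst(\bc,\bc^{n-1}_\tau)^2$; existence of a minimizer follows from lower semicontinuity of $\nrg$ together with the $H^1$-compactness built into $\nrgone$. The optimality conditions at each step furnish Kantorovich potentials which, up to the Lagrange multipliers enforcing the volume constraint, play the role of chemical potentials $\mu^n_i$. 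From these I build the rescaled variables $q^n_i = \omega(c^n_i)\mu^n_i$ in analogy with \eqref{eq:q1}, and a discrete version of \eqref{eq:wkconstr} is produced by the multiplier of the pointwise constraint $c_1+c_2\equiv 1$. Piecewise-constant interpolations $\bc_\tau(t)$ and $\bq_\tau(t)$ in time are then assembled.

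The next task is to derive, uniformly in $\tau$, the three a priori estimates outlined in the excerpt. The energy dissipation inequality $\nrg(\bc^N_\tau) + \sum_{n=1}^N \frac{1}{2\tau}\dst(\bc^n_\tau,\bc^{n-1}_\tau)^2 \le \nrg(\bc^0)$ is immediate from the variational definition and supplies $L^\infty_t H^1_x$ control of $f(c^n_{i,\tau})$ together with a discrete version of $\sqrt{c_i}\nabla\mu_i \in L^2(\Omega_T)$. Since $f'$ is bounded below by a positive constant on $(0,1)$ under Assumption \ref{asm}, this also yields an $L^\infty_t H^1_x$ bound on $c^n_{i,\tau}$ itself. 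The crucial estimate \eqref{eq:formalcrucial} on $\Delta f(c_1)$ I would establish by a flow-interchange argument in the style of Matthes-McCann-Savar\'e: differentiating $\ent$ along an auxiliary Wasserstein gradient flow (naturally the heat flow) against the JKO iteration yields, after integration by parts exploiting \eqref{eq:wkconstr}, the desired $L^2$ bound on $\Delta f(c^n_{1,\tau})$. The assumed concavity of $1/(f')^2$ is used to absorb cross terms and the symmetry $f(1-r)=-f(r)$ to balance the contributions of the two phases.

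Armed with these bounds, an Aubin-Lions argument based on the continuity equation (whose discrete form controls $\partial_t c^n_{i,\tau}$ in a negative Sobolev norm via the kinetic estimate on $\sqrt{c^n_i}\nabla\mu^n_i$) yields strong convergence $c_{i,\tau} \to c_i$ in $L^2(\Omega_T)$ and hence, along a subsequence, almost everywhere. Continuity of $f$, $\omega$, and $\alpha$ transfers this to strong convergence of the corresponding compositions, while the $L^\infty_t H^1_x \cap L^2_t H^2_x$ bound on $f(c_1)$ yields strong convergence of $\nabla f(c_{1,\tau})$ in $L^3(\Omega_T)$ by interpolation (legitimate since $d\le 3$). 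The $L^{3/2}(\Omega_T)$ bound on $q_{i,\tau}$ is obtained via the representation \eqref{eq:q1alt} together with the gradient identity \eqref{eq:nablabarmu}, which produces an $L^2_t W^{1,1}_x$ bound on the mean chemical potential $\bar\mu$ and hence an $L^{3/2}$ bound by Sobolev embedding. Weak compactness of $q_{i,\tau}$ in $L^{3/2}$ then completes the set of convergences needed to pass to the limit in \eqref{eq:wfcont}, \eqref{eq:c1+c2=1_cont}, and \eqref{eq:diff_pot}. H\"older continuity of $t\mapsto c_i(t)$ into $L^2(\Omega)$ follows by interpolating the $\wass$-absolute continuity of $c_{i,\tau}$ with the uniform $H^1$ bound via a Kantorovich-Rubinstein type duality.

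I expect the main obstacle to be the passage to the limit in the nonlinear products $q_{i,\tau}\,\alpha(c_{i,\tau})$ and, especially, $q_{i,\tau}\,\omega(1-c_{i,\tau})\,\nabla f(c_{i,\tau})$ appearing in \eqref{eq:wfcont}: identifying the weak limit requires that strong $L^3$ convergence of $\nabla f(c_{1,\tau})$ be matched against only weak $L^{3/2}$ convergence of $q_{i,\tau}$, so the entire argument rests on making the entropy-dissipation estimate rigorous at the discrete level. Equally delicate is the derivation of the discrete version of \eqref{eq:wkconstr} from the JKO optimality conditions in the regime where $c^n_{i,\tau}$ approaches the boundary of $[0,1]$, because the singularity of $f'$ obstructs a direct differentiation under the integral sign. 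The remedy I envisage is to work with the $q$-variables from the outset and to exploit the smoothing of the implicit Euler step together with the symmetry $f(1-r)=-f(r)$ to retain uniform control of the Lagrange multiplier even near the phase boundaries.
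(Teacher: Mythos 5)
Your overall strategy is the same as the paper's: JKO iterations for $(\nrg,\dst)$, a flow-interchange estimate along the Neumann heat semigroup for the $L^2_tH^2_x$ bound on $f(c_1)$, Kantorovich potentials serving as discrete chemical potentials, the representation \eqref{eq:q1alt} together with \eqref{eq:nablabarmu} for the $L^{p_d}$ bound on $\bq$, and an Aubin--Lions argument plus weak--strong pairing to pass to the limit in \eqref{eq:wfcont}--\eqref{eq:diff_pot}. There is, however, a genuine gap at the very first step. You run the scheme with the bare penalization $\dst(\bc,\bc^{n-1})^2$ and then read off the discrete potentials from ``the'' Kantorovich potentials of the transport between consecutive iterates. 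In the deep-quench regime the iterates are expected to vanish on sets of positive measure, and when neither of the two densities involved in the transport has full support, the Kantorovich potentials are \emph{not} unique up to a single additive constant: they can be modified on the degeneracy region. Then $\mu_i^n$, the normalization that fixes the free constant in the discrete Euler--Lagrange relation (the analogue of \eqref{eq:Knormal}), and the stability-of-potentials argument needed to pass $h\downarrow0$ in the perturbation $c_1^n+h\eta$ of Proposition~\ref{prp:EuLa} are all ill defined or break down. The paper's remedy is to replace $\bc^{n-1}$ by the mollified datum $[\bc^{n-1}]_\delta$ from \eqref{eq:regul}, which has full support, coupled with $\delta\le\tau^2$ as in \eqref{eq:taudelta} so that the extra error is $O(\delta/\tau)$ and vanishes in the limit. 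Your proposal contains no substitute for this device, and the remedies you sketch (``work with the $q$-variables from the outset'', ``exploit the smoothing of the implicit Euler step'') do not supply one, since the minimizer itself need not be strictly positive.

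Two further points where your description would not survive being made precise. First, the entropy/flow-interchange bound is not obtained ``after integration by parts exploiting \eqref{eq:wkconstr}''; it is a self-contained computation along the heat flow, using convexity of $\Omega$, the Neumann conditions, and the concavity of $1/(f')^2$ (the pointwise inequality $h\ge g^2$ in the proof of Lemma~\ref{lem:apriori}), together with an approximation $f_\eps$ of $f$ needed because $\nrgone$ is only lower, not upper, semicontinuous along the heat flow at $s=0$. Second, no Lagrange multiplier for $c_1+c_2\equiv1$ is ever computed: one perturbs inside the constraint set ($c_1^n+h\eta$, $c_2^n-h\eta$, with $\eta$ of zero mean supported in the positivity set $P$), obtains $\psi_1^n-\psi_2^n+A=f'(c_1^n)\Delta f(c_1^n)+\chi\left(c_1^n-\smallhalf\right)$ on $P$, extends the relation to all of $\Omega$ after multiplying by $1/f'(c_1^n)=\omega(c_1^n)\omega(c_2^n)$ (using $\omega(0)=0$ and that $\Delta f(c_1^n)=0$ a.e.\ on level sets of the $H^2$ function $f(c_1^n)$), and then eliminates $A$ via \eqref{eq:Knormal}. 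These are precisely the steps your sketch would have to reproduce near the pure phases, and they hinge on the regularization you omitted.
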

Notice that the no-flux boundary conditions \eqref{eq:noflux} are encoded
in the weak form \eqref{eq:wfcont} of the continuity equations \eqref{eq:eqcont1}\&\eqref{eq:eqcont2}:
since the test function $\xi$ is only supposed have vanishing normal derivative,
but still may attain arbitrary values on $\partial\Omega$,
a formal integration by parts in \eqref{eq:wfcont} produces a weak form of \eqref{eq:noflux}.

\subsection{Plan of the paper}
In Section \ref{sct:prelim} below, we give a very brief summary of the relevant results 
from the theory of optimal transportation that are needed in our proof of Theorem~\ref{thm:main}.
In Section \ref{sct:mm}, we describe the construction of the time-discrete approximate solutions,
and we derive a priori estimates in Sections~\ref{sct:apriori} and~\ref{sct:bpriori} 
on the approximate volume fractions $\bc$ and phase potentials $\bq$ respectively.
Finally, in Section \ref{sct:convergence}, we pass to the time-continuous limit,
obtaining a weak solution in the sense of Theorem~\ref{thm:main}.

\subsection{Notation}
When we write in the following that some constant depends \emph{only on the parameters of the problem},
then we mean that this constant can in principle be expressed in terms of
the factor $\chi$, the mobilities $m_1$, $m_2$, the averages $\rho_1$, $\rho_2$ from \eqref{eq:spc},
properties of the function $f$, and geometric properties of the domain $\Omega$.

\section{Preliminaries from the theory of optimal transportation}
\label{sct:prelim}
In the section, we briefly recall three alternative definitions of the $L^2$-Wasserstein distance $\wass$;
in the proof of our main result, we need all three of them.
For more information on the mathematical theory of optimal mass transportation,
we refer to the monographs~\cite{Villani-Topics, Villani09, Santa}.
Below, we assume that $\rho_0,\rho_1:\Omega\to[0,1]$ are two measurable functions of the same total mass,
\begin{align*}
  \int_\Omega\rho_0(x)\dd x = \int_\Omega\rho_1(x)\dd x.
\end{align*}
In this case, the definitions are all equivalent.

\subsection{Monge characterization}
One says that a measurable map $T:\Omega\to\Omega$ pushes $\rho_0$ forward to $\rho_1$,
written as $T\#\rho_0=\rho_1$,
if 
\begin{align*}
  \int_\Omega\Theta(x)\rho_1(x)\dd x = \int_\Omega\Theta\circ T(y)\rho_0(y)\dd y
  \quad \text{for all $\Theta\in C^0(\overline\Omega)$}.
\end{align*}
The Monge characterization of the $L^2$-Wasserstein distance between $\rho_0$ and $\rho_1$
is given by
\begin{align}
  \label{eq:W2}
  \wass(\rho_0,\rho_1)^2 = \inf_{T\#\rho_0=\rho_1}\int_\Omega |T(x)-x|^2\rho_0(x)\dd x,
\end{align}
where the infimum runs over all measurable maps $T:\Omega\to\Omega$ with $T\#\rho_0=\rho_1$.
In the situation at hand, the infimum in \eqref{eq:W2} is actually a minimum.
It is attained by an optimal transport map $\Topt$;
the optimal map is uniquely determined on the support of $\rho_0$.

\subsection{Kantorovich characterization}
A Borel measure $\gamma$ on the product space $\Omega\times\Omega$ is called
a transport plan from $\rho_0$ to $\rho_1$ if the latter are the marginals of $\gamma$,
i.e.,
\begin{align*}
  \int_{\Omega\times\Omega}\varphi(x)\dd\gamma(x,y) = \int_\Omega \varphi(x)\rho_0(x)\dd x,
  \quad
  \int_{\Omega\times\Omega}\psi(y)\dd\gamma(x,y) = \int_\Omega \psi(y)\rho_1(y)\dd y,
\end{align*}
for all $\varphi,\psi\in C^0(\Omega)$.
The set of all such transport plans is denoted by $\Gamma(\rho_0,\rho_1)$.
The Kantorovich characterization of $\wass$ amounts to
\begin{align*}
  \wass(\rho_0,\rho_1)^2 = \inf_{\gamma\in\Gamma(\rho_0,\rho_1)}\int_{\Omega\times\Omega}|x-y|^2\dd\gamma(x,y),
\end{align*}
and the infimum is attained by some optimal plan $\gamma_\opt$.
In the situation at hand, $\gamma_\opt$ is unique.
Moreover, it is concentrated on a graph: 
$\gamma_\opt$'s support is contained in $\{(x,\Topt(x))|x\in\Omega\}\subset \Omega\times\Omega$,
where $\Topt$ is an optimal map from the Monge characterization.

\subsection{Dual characterization}
\label{sct:W2dual}
The dual characterization of the Wasserstein distance is given by
\begin{align}
  \label{eq:Wdual}
  \frac12 \wass(\rho_0,\rho_1)^2 = \sup_{\varphi(x)+\psi(y)\le\frac12|x-y|^2}
  \left(\int_\Omega\varphi(x)\rho_0(x)\dd x + \int_\Omega\psi(y)\rho_1(y)\dd y \right),
\end{align}
where the supremum runs over all potentials $\phi,\psi\in C^0(\Omega)$ satisfying $\varphi(x) + \psi(y) \le \frac12|x-y|^2$.
The infimum is attained by a pair of globally Lipschitz functions $(\varphi_\opt,\psi_\opt)$,
which are referred to as Kantorovich potentials.
The potentials are related to the optimal Monge map $\Topt$ via $\Topt(x)=x-\nabla\varphi_\opt(x)$.

There are always infinitely many pairs of Kantorovich potentials,
since the value of the function and the constraint are invariant under the exchange of a global constant,
i.e., $\varphi\leadsto\varphi+C$ and $\psi\leadsto\psi-C$ for any $C\in\setR$.
On the other hand, if at least one of the two densities $\rho_0$ and $\rho_1$ has full support,
then this global constant is the only degree of non-uniqueness.

\section{Time-discrete approximation via minimizing movement scheme}
\label{sct:mm}
As explained in Section~\ref{ssct:GF},
the problem~\eqref{eq:eqs}--\eqref{eq:ic} can be interpreted as the gradient flow
of the singular energy $\nrg$ with respect to the metric $\dst$ on the space $\spc$.
In view of that structure, a natural approach to construction of solutions to \eqref{eq:eqs}
is the time-discrete approximation by means of the minimizing movement scheme.
This approach has been proven extremely robust,
and has been applied for existence proofs in linear and nonlinear Fokker-Planck equations \cite{JKO98,AGS08},
non-local aggregation-diffusion equations \cite{BCC,CDFLS11,KMX17,ZM15},
doubly non-linear and flux-limited equations \cite{Agueh05,McP},
fourth order quantum and lubrication equations \cite{GST,MMS09,LM14,LMS12},
multi-phase flows \cite{LMold,CGM17,CMN18} and many more settings.

In addition to approximations of the volume fractions $c_1$ and $c_2$,
we also need to construct approximations of the auxiliary quantities $q_1$ and $q_2$.
These will be obtained from the Kantorovich potentials
for the optimal transport of the volume fractions between time steps.
In order to ensure that these potentials are well-defined (up to a global additive constant),
we regularize the minimizing movement scheme
by modifying the volume fractions in the previous time step such that both have full support.
This removes the ambiguity in the definition of the Kantorovich potentials, as 
explained in Section~\ref{sct:W2dual}. 

Throughout this section, let two parameters be fixed:
a time step size $\tau>0$, and a positivity regularization $\delta>0$.
We assume that $\tau$ and $\delta$ are related as follows:
\begin{align}
  \label{eq:taudelta}
  \delta\le\tau^2.
\end{align}
Recall the definitions of the energy functional $\nrg$ from \eqref{eq:nrg}
and of the metric $\dst$ from \eqref{eq:dst} on the space $\spc$.
Recall further the definition of the averages $\rho_1$ and $\rho_2$ in \eqref{eq:spc},
and introduce the regularization $[\bc]_\delta=([c_1]_\delta,[c_2]_\delta)$ of a $\bc=(c_1,c_2)\in\spc$
by
\begin{equation}\label{eq:regul}
  [c_i]_\delta = \delta\rho_i + (1-\delta)c_i.
  \end{equation}
With these notations at hand, define for given $\bar\bc\in\spc$ a variational functional in $\bc\in\spc$ by
\begin{align}
  \label{eq:1}
  \nrg_{\tau,\delta}(\bc;\bar\bc) 
  = \frac1{2\tau}\dst\big(\bc,[\bar\bc]_\delta\big)^2 + \nrg(\bc).
\end{align}
At each instance of discretized time $t=n\tau$,
an approximation $(\bc^n,\bq^n)$ of $(\bc(t),\bq(t))$ is constructed as follows.
Starting from the given initial condition $\bc^0$,
each $\bc^n$ is inductively chosen as a global minimizer of $\nrg_{\tau,\delta}(\cdot;\bc^{n-1})$,
i.e.,
\begin{equation}\label{eq:JKO}
  \bc^n \in \operatorname*{argmin}_{\bc\, \in\, \spc} \nrg_{\tau,\delta}(\bc;\bc^{n-1}).
\end{equation}
Solvability of that minimization problem is shown in Lemma \ref{lem:existence} below.

The accompanying auxiliary quantities $q_1^n$ and $q_2^n$ are obtained as follows.
First, let $(\varphi_1^n,\psi_1^n)$ and $(\varphi_2^n,\psi_2^n)$ be two pairs of Kantorovich potentials
for the respective optimal transport of $[c_i^{n-1}]_\delta$ to $c_i^n$;
since $[c_i^{n-1}]_\delta\ge\delta\rho_i$ on $\Omega$, these pairs are unique up to addition of global constants.
These constants are normalized by requiring
\begin{align}
  \label{eq:Knormal}
  \int_\Omega \left[\frac{c_1^n\psi_1^n}{m_1}+\frac{c_2^n\psi_2^n}{m_2}\right]\dd x = 0,
  \quad
  \int_\Omega \left[\psi_2^n-\psi_1^n-\chi\left(c_1-\frac12\right)\right]\omega(c_1^n)\omega(c_2^n)\dd x = 0.
\end{align}
From the $\psi_i^n$, define the rescaled pair of potentials $\bmu^n=(\mu_1^n,\mu_2^n)$ via
\begin{align*}
  \mu_1^n := \frac{\psi_1^n}{m_1\tau},
  \quad
  \mu_2^n := \frac{\psi_2^n}{m_2\tau},
\end{align*}
and finally $\bq^n=(q_1^n,q_2^n)$ is given --- as indicated in \eqref{eq:q1} --- by
\begin{align*}
  q_1^n:=\omega(c_1^n)\mu_1^n,
  \quad
  q_2^n:= \omega(c_2^n)\mu_2^n.
\end{align*}
%
\begin{lem}
  \label{lem:existence}
  Given initial data $\bc^0$ as in Theorem \ref{thm:main},
  the minimization problem for $\bc^n$ can be solved inductively,
  leading to infinite sequences $(\bc^n)_{n\in\setN}$ and $(\bq^n)_{n\in\setN}$.
  The $\bc^n$ satisy the constraint
  \begin{equation}\label{eq:c1+c2=1}
    c_1^n + c_2^n = 1. 
  \end{equation}
\end{lem}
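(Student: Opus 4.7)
My plan is to proceed by induction on $n\geq 0$, with $\bc^0$ given. For the inductive step, I assume $\bc^{n-1}\in\spc$ with $c_1^{n-1}+c_2^{n-1}=1$ and $\nrg(\bc^{n-1})<\infty$, and apply the direct method of calculus of variations to the functional $\nrg_{\tau,\delta}(\,\cdot\,;\bc^{n-1})$. To see that the infimum is finite, I would use $[\bc^{n-1}]_\delta$ as an admissible competitor: both components lie in $[\delta\rho_i,\,1-\delta\rho_{3-i}]\subset(0,1)$, their sum equals $1$, and the averages are preserved. On this compact subinterval $f$ has bounded derivative, and since $1/f'$ extends continuously to all of $[0,1]$ (by the concavity of $1/(f')^2$ in Assumption~\ref{asm}), $\nabla c_i^{n-1}=\nabla f(c_i^{n-1})/f'(c_i^{n-1})$ lies in $L^2(\Omega)$, which in turn gives $f([c_i^{n-1}]_\delta)\in H^1(\Omega)$ and hence $\nrg([\bc^{n-1}]_\delta)<\infty$. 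Because also $\dst([\bc^{n-1}]_\delta,[\bc^{n-1}]_\delta)=0$ and all summands of $\nrg_{\tau,\delta}$ are non-negative, the infimum is finite.

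Along a minimizing sequence $(\bc^{(k)})$, I would extract compactness from the uniform energy bound, which forces $c_1^{(k)}+c_2^{(k)}=1$ a.e.\ (from the indicator) and provides a uniform $H^1(\Omega)$-bound on each $f(c_i^{(k)})$; the $L^\infty$ part is automatic from $f\in C^0([0,1])$. By Rellich--Kondrachov, along a subsequence $f(c_i^{(k)})\to\phi_i$ strongly in $L^2$, weakly in $H^1$, and almost everywhere. The key observation is that $f:[0,1]\to[f(0),f(1)]$ is a homeomorphism (continuous, strictly increasing, with continuous inverse since $1/f'$ extends continuously by $0$ to the endpoints), so setting $c_i:=f^{-1}(\phi_i)$ gives $c_i^{(k)}\to c_i$ a.e., which dominated convergence upgrades to strong $L^p$-convergence for every $p<\infty$. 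In particular $\bc\in\spc$ with $c_1+c_2=1$ a.e. The remaining lower semicontinuities are routine: weak $H^1$-lsc of $\int|\nabla f(c_i)|^2\dd x$, $L^2$-continuity of $\int c_1(1-c_1)\dd x$, and narrow lsc of $\dst(\,\cdot\,,[\bc^{n-1}]_\delta)^2$, with narrow convergence furnished by the strong $L^1$-convergence. Setting $\bc^n:=\bc$ then yields a minimizer, and the finiteness of $\nrg(\bc^n)$ gives~\eqref{eq:c1+c2=1}.

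Finally, the accompanying $\bq^n$ is obtained via optimal transport. Since $[c_i^{n-1}]_\delta\geq\delta\rho_i>0$ on all of $\Omega$, the Kantorovich potentials $(\varphi_i^n,\psi_i^n)$ for transporting $[c_i^{n-1}]_\delta$ to $c_i^n$ are unique up to one additive constant per pair (cf.\ Section~\ref{sct:W2dual}); these two constants will be fixed by the two linear conditions in \eqref{eq:Knormal}, and I would verify that the associated $2\times 2$ system is non-degenerate because $\int_\Omega\omega(c_1^n)\omega(c_2^n)\dd x>0$ (otherwise $c_1^n$ would take only the pure-phase values $\{0,1\}$ and $f(c_1^n)$ could not lie in $H^1$). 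The rescalings $\mu_i^n=\psi_i^n/(m_i\tau)$ and $q_i^n=\omega(c_i^n)\mu_i^n$ close the induction. The main delicate point of the whole argument is the passage from the weak $H^1$-bound on $f(c_i^{(k)})$ to strong $L^p$-convergence of $c_i^{(k)}$; this is where the structural hypotheses in Assumption~\ref{asm} are genuinely needed.
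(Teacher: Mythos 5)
Your proposal is correct and follows essentially the same route as the paper: the direct method with coercivity and compactness obtained from the uniform $H^1$-bound on $f(c_i)$, convergence transferred to $c_i$ via the continuous inverse of $f$, lower semicontinuity of the gradient and distance terms, and the constraint \eqref{eq:c1+c2=1} read off from finiteness of the energy (your use of $[\bc^{n-1}]_\delta$ instead of $\bc^{n-1}$ as the competitor for properness, and your explicit check that the normalization \eqref{eq:Knormal} is a non-degenerate $2\times2$ system, are harmless additions the paper leaves implicit). One small imprecision: if $\int_\Omega\omega(c_1^n)\omega(c_2^n)\dd x=0$ then $c_1^n\in\{0,1\}$ a.e., and since $f(c_1^n)\in H^1(\Omega)$ on a connected domain this forces $c_1^n$ to be a.e.\ constant (a constant \emph{is} in $H^1$), so the contradiction comes from $0<\rho_1<1$ rather than from failure of $H^1$-regularity.
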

\begin{proof}
  Inductive solvability of the minimization problem follows by the direct methods from the calculus of variations.
  Indeed, it suffices to observe the following about the functional $\nrg_{\tau,\delta}(\cdot;\bc^{n-1})$,
  considered as a map from $\spc$ with the topology of $L^2(\Omega;\setR^2)$ to the extended non-negative real numbers:
  \begin{itemize}
  \item It is bounded below (in fact: is non-negative)
    and is not identically $+\infty$ (e.g., is finite at $\bc^{n-1}$).
  \item It is coercive: 
    if $\tilde\bc^k$ is a sequence in $\spc$ such that $\nrg_{\tau,\delta}(\tilde\bc^k;\bc^{n-1})$ is bounded,
    then in particular $\int_\Omega|\nabla f(\tilde c_1^k)|^2\dd x$ is bounded,
    i.e., $f(\tilde c_1^k)$ is bounded in $H^1(\Omega)$.
    Rellich's compactness theorem now implies strong convergence of a subsequence $f(\tilde c_1^{k'})$ in $L^2(\Omega)$,
    and thanks to the properties of $f$, also $\tilde c_1^{k'}$ itself converges in $L^2(\Omega)$.
    Finally, since finiteness of $\nrg_{\tau,\delta}(\tilde\bc^k;\bc^{n-1})$ implies that $\tilde c_2^k = 1-\tilde c_1^k$,
    convergence of $\tilde c_2^{k'}$ follows as well.
  \item It is lower semi-continuous.
    To see this, let $\tilde\bc^k$ be a sequence in $\spc$ that converges to $\tilde\bc^*$ in $L^2(\Omega;\setR^2)$.
    Convergence of $\dst(\tilde\bc^k,\bc^{n-1})$ and of $\int_\Omega \tilde c_1^k(1-\tilde c_1^k)\dd x$
    towards their respective limits is immediate.
    On the other hand, it follows by continuity of $f$ that also $f(\tilde c_1^k)$ converges to $f(\tilde c_1^*)$ in $L^2(\Omega)$.
    And so,
    \begin{align*}
      \liminf_{k\to\infty}\int_\Omega |\nabla f(\tilde c_1^k)|^2\dd x\ge \int_\Omega|\nabla f(\tilde c_1^*)|^2\dd x
    \end{align*}
    is a consequence of the lower semi-continuity of the $H^1(\Omega)$-norm on $L^2(\Omega)$.
  \end{itemize}
  The relation~\eqref{eq:c1+c2=1} holds since each minimizer $\bc^n$ has a finite energy.
\end{proof}

\section{A priori estimates on the volume fractions}\label{sct:apriori}
The ultimate goal is to obtain solutions $\bc$ and $\bq$ of the weak formulation \eqref{eq:wf}
as appropriate limits of the time-discrete quantities $\bc^n$ and $\bq^n$
for $\tau\downarrow0$ and $\delta\downarrow0$.
In this and the next section,
we establish the a priori estimates that eventually provide sufficient compactness for performing the limit.
As indicated in the introduction, there are three essential estimates:
the first two, given in Lemma \ref{lem:classical} right below, follow almost immediately from the gradient flow structure.
These two estimates are sufficient to conclude the weak convergence of the volume fractions.
The third estimate, given in Lemma \ref{lem:apriori},
follows from the control \eqref{eq:ent} on the production rate of the entropy $\ent$.
It provides strong convergence of the volume fractions and indirectly
--- see Section \label{sct:bpriori} below ---
also weak convergence of the auxiliary functions.
\begin{lem}
  \label{lem:classical}
  There is a constant $K$, only depending on the parameters of the problem, such that for all $N=1,2,\ldots$ 
  \begin{align}
    \label{eq:classical}
    \nrg(\bc^N)+\frac\tau2\sum_{n=1}^N\left(\frac{\dst(\bc^n,[\bc^{n-1}]_\delta)}\tau\right)^2
    \le \nrg(\bc^0) + \frac{K}2N\tau. 
  \end{align}
  Consequently, for all indices $n\le N$ and $\underline n<\overline n\le N$,
  \begin{align}
    \label{eq:totalbound}
    \|\nabla f(c_1^n)\|_{L^2}^2
    &\le 2\nrg(\bc^0) + KN\tau, \qquad \text{for all $n=1,2,\ldots,N$}, \\
    \label{eq:holderW2}
    \dst(\bc^{\overline n},\bc^{\underline n})
    &\le 2\left(\nrg(\bc^0)+KN\tau\right)^{\frac12}\big(\tau(\overline n-\underline n)\big)^{\frac12}
      \qquad \text{for $0\le\underline n<\overline n\le N$}, \\
    \label{eq:holderL2}
    \|\bc^{\overline n} - \bc^{\underline n}\|_{L^2}
    &\le 2\sqrt[4]{m_1}\big(\nrg(\bc^0)+KN\tau\big)^{\frac12} \big(\tau(\overline n-\underline n)\big)^{\frac14}.
  \end{align}
\end{lem}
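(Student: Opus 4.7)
My plan is to obtain the energy-dissipation inequality \eqref{eq:classical} from a one-step comparison in the scheme \eqref{eq:JKO} and then to derive the three consequences \eqref{eq:totalbound}--\eqref{eq:holderL2} from it.

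For the one-step estimate, I test the minimality of $\bc^n$ against the competitor $\bc^{n-1}$ in $\nrg_{\tau,\delta}(\cdot;\bc^{n-1})$ to obtain
\begin{equation*}
\frac{1}{2\tau}\dst(\bc^n,[\bc^{n-1}]_\delta)^2+\nrg(\bc^n)\le\frac{1}{2\tau}\dst(\bc^{n-1},[\bc^{n-1}]_\delta)^2+\nrg(\bc^{n-1}).
\end{equation*}
The quantity $\dst(\bc^{n-1},[\bc^{n-1}]_\delta)^2$ on the right, which would not be present in the unregularized scheme, is controlled by an explicit transport plan: since $[c_i^{n-1}]_\delta=(1-\delta)c_i^{n-1}+\delta\rho_i$, I leave the fraction $(1-\delta)c_i^{n-1}$ in place and redistribute the constant layer $\delta\rho_i$ onto $\delta c_i^{n-1}$ at a cost bounded by $\operatorname{diam}(\Omega)^2\,\delta\rho_i|\Omega|$. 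Thus $\dst(\bc^{n-1},[\bc^{n-1}]_\delta)^2\le C_0\delta$, and the coupling $\delta\le\tau^2$ from~\eqref{eq:taudelta} turns this into an $O(\tau)$ error per step. Telescoping the one-step inequality over $n=1,\dots,N$ yields \eqref{eq:classical} with a constant $K$ depending only on the parameters of the problem.

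Estimate \eqref{eq:totalbound} is then immediate upon discarding the non-negative dissipation sum and the non-negative $\chi$-term in $\nrg$. For \eqref{eq:holderW2}, I split each one-step distance via the triangle inequality, $\dst(\bc^n,\bc^{n-1})\le\dst(\bc^n,[\bc^{n-1}]_\delta)+\dst([\bc^{n-1}]_\delta,\bc^{n-1})$, sum from $\underline n+1$ to $\bar n$, apply Cauchy-Schwarz to the first contribution in combination with \eqref{eq:classical}, and bound the second contribution directly by $\sqrt{C_0\delta}\le\sqrt{C_0}\,\tau$ per step. A harmless adjustment of $K$ absorbs the regularization contribution into the $N\tau$-term.

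The last estimate \eqref{eq:holderL2} is the most delicate because it transfers a bound from the weak Wasserstein metric to the strong $L^2$-topology. Using $c_1+c_2=1$ reduces the task to controlling $\|c_1^{\bar n}-c_1^{\underline n}\|_{L^2}$. I invoke a Loeper-type inequality $\|\rho_0-\rho_1\|_{H^{-1}}\le C\wass(\rho_0,\rho_1)$, valid for densities bounded by $1$ on the convex domain $\Omega$, together with the elementary interpolation $\|u\|_{L^2}^2\le\|u\|_{H^{-1}}\|u\|_{H^1}$. The missing $H^1$-bound on $c_1^n$ comes from \eqref{eq:totalbound} via $|\nabla c_1^n|\le\|1/f'\|_{L^\infty([0,1])}|\nabla f(c_1^n)|$, which is finite by Assumption~\ref{asm}. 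Combining these estimates with \eqref{eq:holderW2} and with the trivial bound $\wass(c_1^{\bar n},c_1^{\underline n})\le\sqrt{m_1}\,\dst(\bc^{\bar n},\bc^{\underline n})$ produces both the stated Hölder exponent $\tfrac14$ and the factor $\sqrt[4]{m_1}$. The main obstacle here is locating the correct Wasserstein-to-Sobolev interpolation; once that is in place, the rest is bookkeeping.
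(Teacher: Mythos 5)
Your proposal is correct and follows essentially the same route as the paper: minimality against the competitor $\bc^{n-1}$, the explicit plan bounding $\dst(\bc^{n-1},[\bc^{n-1}]_\delta)^2\le C_0\delta\le C_0\tau^2$ (this is the paper's Lemma~\ref{lem:mollify}), telescoping, and then triangle inequality plus Cauchy--Schwarz for \eqref{eq:holderW2}. For \eqref{eq:holderL2}, your Loeper-type bound $\|c_1^{\overline n}-c_1^{\underline n}\|_{H^{-1}}\lesssim\wass$ (for densities bounded by $1$) combined with the interpolation $\|u\|_{L^2}^2\le\|u\|_{H^{-1}}\|u\|_{H^1}$ and the uniform bound $f'\ge f'(1/2)>0$ is exactly the content of the paper's appendix Lemma~\ref{lem:fromW2toL2}, which proves that estimate by hand via Kantorovich potentials, so the argument is the same up to citing versus reproving this step.
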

\begin{proof}
  By definition of $\bc^n$ as a minimizer, $\nrg_{\tau,\delta}(\bc^n;\bc^{n-1})\le\nrg_{\tau,\delta}(\bc^{n-1};\bc^{n-1})$,
  which amounts to
  \begin{align}
    \label{eq:preclassical}
    \nrg(\bc^n)+\frac\tau2\left(\frac{\dst(\bc^n;[\bc^{n-1}]_\delta)}\tau\right)^2
    \le \nrg(\bc^{n-1}) + \frac1{2\tau}\dst(\bc^{n-1},[\bc^{n-1}]_\delta)^2.
  \end{align}
  The last term is bounded by $K\delta/(2\tau)\le K\tau/2$ thanks to Lemma \ref{lem:mollify} from the appendix,
  and to our assumption $\delta\le\tau^2$ from \eqref{eq:taudelta}.
  Summation of \eqref{eq:preclassical} from $n=1$ to $n=N$ yields \eqref{eq:classical},
  and \eqref{eq:totalbound} is an immediate consequence from the definition of $\nrg$.
  To conclude \eqref{eq:holderW2} from here, we use the triangle inequality for $\dst$
  --- which is inherited from $\wass$ ---
  and H\"older's inequality for sums,
  \begin{align*}
    \dst(\bc^{\overline n},\bc^{\underline n})
    \le \sum_{n=\underline n+1}^{\overline n}\dst(\bc^n,\bc^{n-1})
    \le \left(\tau\sum_{n=\underline n+1}^{\overline n}\left(\frac{\dst(\bc^n,\bc^{n-1})}\tau\right)^2\right)^{\frac12}\big(\tau(\overline n-\underline n)\big)^{\frac12}.
  \end{align*}
  The expression inside the first pair of brackets is now estimated with the help of \eqref{eq:classical} above,
  and another application of Lemma \ref{lem:mollify}:
  \begin{align*}
    \tau\sum_{n=\underline n+1}^{\overline n}\left(\frac{\dst(\bc^n,\bc^{n-1})}\tau\right)^2
    \le \tau\sum_{n=\underline n+1}^{\overline n}\left[2\left(\frac{\dst(\bc^n,[\bc^{n-1}]_\delta)}\tau\right)^2+\frac{2K\delta}{\tau^2}\right]
    \le 4\left[\nrg(\bc^0) + \frac{KT}2\right] + \frac{2K\delta}\tau N.
  \end{align*}
  Substitution of this estimate above and recalling \eqref{eq:taudelta} produces \eqref{eq:holderW2}.
  Estimate \eqref{eq:holderL2} emerges as a consequence of \eqref{eq:totalbound} and \eqref{eq:holderW2}
  via Lemma \ref{lem:fromW2toL2} from the appendix.
\end{proof}
The bound \eqref{eq:classical} can be formulated as a weighted $H^1$-estimate on the Kantorovich potentials.
\begin{cor}
  At each $n=1,2,\ldots$, we have that
  \begin{align}
    \label{eq:push}
    [c_1^{n-1}]_\delta = (\id-\nabla\psi^n_1)\#c_1^n,
    \quad\text{and}\quad
    [c_2^{n-1}]_\delta = (\id-\nabla\psi^n_2)\#c_2^n,
  \end{align}
  and therefore, with the same constant $K$ as in Lemma \ref{lem:classical} above,
  for all $N=1,2,\ldots$,
  \begin{align}
    \label{eq:kinetic}
    \tau\sum_{n=1}^N\int_\Omega \left(\frac{c_1^n}{m_1}\left|\frac{\nabla\psi_1^n}\tau\right|^2+\frac{c_2^n}{m_2}\left|\frac{\nabla\psi_2^n}\tau\right|^2\right)\dd x
    \le 2\nrg(\bc^0) + \frac{K\delta N}\tau.
  \end{align}
\end{cor}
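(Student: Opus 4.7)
My plan is to derive both statements directly from the dual characterization of $\wass$ reviewed in Section~\ref{sct:W2dual}, combined with a very light refinement of the computation already carried out for Lemma~\ref{lem:classical}.

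I would start with the push-forward identities \eqref{eq:push}. Because the regularized densities satisfy $[c_i^{n-1}]_\delta\ge\delta\rho_i>0$ everywhere on $\Omega$, the source measures in the transport problems $[c_i^{n-1}]_\delta\to c_i^n$ have full support; this is precisely the hypothesis under which Section~\ref{sct:W2dual} guarantees that the Kantorovich potentials are unique up to an additive constant, and hence that the normalizations \eqref{eq:Knormal} single out $\psi_1^n$ and $\psi_2^n$ unambiguously. Applying the relation $T_{\text{opt}}=\id-\nabla\varphi_{\text{opt}}$ from Section~\ref{sct:W2dual} with the roles of source and target exchanged, the map $\id-\nabla\psi_i^n$ is the optimal transport map from $c_i^n$ onto $[c_i^{n-1}]_\delta$, which is exactly \eqref{eq:push}.

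Next I would identify the kinetic energy. Substituting the map from \eqref{eq:push} into the Monge characterization \eqref{eq:W2} yields
\begin{align*}
\wass(c_i^n,[c_i^{n-1}]_\delta)^2=\int_\Omega|\nabla\psi_i^n|^2\,c_i^n\dd x,
\end{align*}
so by the definition \eqref{eq:dst} of $\dst$,
\begin{align*}
\left(\frac{\dst(\bc^n,[\bc^{n-1}]_\delta)}{\tau}\right)^2=\int_\Omega\left(\frac{c_1^n}{m_1}\left|\frac{\nabla\psi_1^n}{\tau}\right|^2+\frac{c_2^n}{m_2}\left|\frac{\nabla\psi_2^n}{\tau}\right|^2\right)\dd x.
\end{align*}

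To close the argument, I would revisit the proof of Lemma~\ref{lem:classical} one step upstream: before invoking \eqref{eq:taudelta} to simplify $K\delta/(2\tau)$ to $K\tau/2$, telescoping \eqref{eq:preclassical} and discarding the nonnegative term $\nrg(\bc^N)$ already gives
\begin{align*}
\frac{\tau}{2}\sum_{n=1}^N\left(\frac{\dst(\bc^n,[\bc^{n-1}]_\delta)}{\tau}\right)^2\le\nrg(\bc^0)+\frac{K\delta N}{2\tau}.
\end{align*}
Multiplying by two and integrating the preceding identity over $n=1,\dots,N$ produces \eqref{eq:kinetic}. There is no genuine obstacle here: the only point that requires care is the source/target convention in Section~\ref{sct:W2dual}, which pins down that it is $\nabla\psi_i^n$ rather than $\nabla\varphi_i^n$ that must appear in both \eqref{eq:push} and \eqref{eq:kinetic}.
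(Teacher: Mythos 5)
Your proposal is correct and follows essentially the same route as the paper: identify $\id-\nabla\psi_i^n$ as the optimal map from $c_i^n$ to $[c_i^{n-1}]_\delta$ via the dual characterization, evaluate the Monge functional to get the kinetic-energy identity, and then invoke the telescoped energy inequality with $\nrg(\bc^N)\ge0$. If anything, your step of going back to \eqref{eq:preclassical} before the simplification $K\delta/(2\tau)\le K\tau/2$ is slightly more careful than the paper's terse citation of \eqref{eq:classical}, since it is exactly what produces the stated right-hand side $2\nrg(\bc^0)+K\delta N/\tau$.
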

\begin{proof}
  The relations \eqref{eq:push} express the property of the Kanotrovich potential $\psi_i^n$
  that $x\mapsto x-\nabla\psi_i^n(x)$ is a transport map from $c_i^n$ to $[c_i^{n-1}]_\delta$.
  In fact, it is the \emph{optimal} transport map, see Section \ref{sct:W2dual},
  and hence \eqref{eq:W2} implies that
  \begin{align*}
    \left(\frac{\dst\big(\bc^n,[\bc^{n-1}]_\delta\big)}\tau\right)^2
    = \frac{\wass(c_1^n,[c_1^{n-1}]_\delta)^2}{m_1\tau^2} + \frac{\wass(c_2^n,[c_2^{n-1}]_\delta)^2}{m_2\tau^2} 
    = \int_\Omega \left(\frac{c_1^n}{m_1}\left|\frac{\nabla\psi_1^n}\tau\right|^2
    +\frac{c_2^n}{m_2}\left|\frac{\nabla\psi_2^n}\tau\right|^2\right)\dd x.
  \end{align*}
  By non-negativity of $\nrg$, the desired estimate \eqref{eq:kinetic} is now implied by \eqref{eq:classical}.
\end{proof}
The third a priori estimate below is more specific to the system \eqref{eq:eqs},
and is also more difficult to prove.
\begin{lem}
  \label{lem:apriori}
  There is a constant $C$, only depending on the parameters of the problem,
  such that for all $N=1,2,\ldots$:
  \begin{align}
    \label{eq:fbound}
    \tau\sum_{n=1}^N\int_\Omega\big\| f(c_1^n)\big\|_{H^2}^2\dd x &\le C(1+N\tau).
  \end{align}
  Moreover, $c_1^n$ and $f(c_1^n)$ satisfy homogeneous Neumann boundary conditions at each $n=1,2,\ldots$ 
\end{lem}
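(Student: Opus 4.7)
The plan is to establish \eqref{eq:fbound} by a flow-interchange argument in the spirit of \cite{MMS09, LMS12}, using the heat semigroup $(s_t)_{t\ge 0}$ on $\Omega$ with Neumann boundary conditions as the auxiliary flow; this will yield the discrete counterpart of the formal entropy-dissipation inequality \eqref{eq:formalcrucial}. The admissible competitor is $\bc(t) := (s_t c_1^n,\, 1 - s_t c_1^n)$, which belongs to $\spc$ for all $t \ge 0$ by linearity of $s_t$ and its preservation of constants and total mass, and which satisfies the volume constraint \eqref{eq:c1+c2=1} automatically. Testing the minimality of $\bc^n$ in \eqref{eq:JKO} against $\bc(t)$, dividing by $t > 0$, and sending $t \downarrow 0$, I would combine the EVI characterization of the heat flow as the Wasserstein gradient flow of the Boltzmann entropy $\tilde\ent$ with a direct computation of $\tfrac{d}{dt}\nrg(\bc(t))\big|_{t=0^+}$ via the chain rule and integration by parts (using Neumann BC preserved by $s_t$), arriving at
\begin{align*}
    \int_\Omega f'(c_1^n)\, \Delta c_1^n\, \Delta f(c_1^n)\dd x - \chi \int_\Omega |\nabla c_1^n|^2 \dd x \le \frac{1}{\tau}\big[\ent([\bc^{n-1}]_\delta) - \ent(\bc^n)\big].
\end{align*}

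The central technical step is to extract from the left-hand side a strictly positive multiple of $\|f(c_1^n)\|_{H^2}^2$ plus a bounded remainder. Using the algebraic identity $f'(c_1)\Delta c_1 = \Delta f(c_1) - f''(c_1) |\nabla c_1|^2$ and integrating the resulting cross term $\int f''(c_1^n)|\nabla c_1^n|^2 \Delta f(c_1^n) \dd x$ by parts once more (after noting $f''(c_1)|\nabla c_1|^2 = \nabla \log f'(c_1)\cdot \nabla f(c_1)$), the problem reduces to controlling higher-order expressions in $v := f(c_1^n)$ that are quartic in $\nabla v$ and quadratic in $D^2 v$. The concavity assumption $(1/(f')^2)'' \le 0$, equivalent to the pointwise inequality $f'(r) f'''(r) \ge 3 f''(r)^2$ on $(0,1)$, provides exactly the sign needed to dominate the otherwise singular terms. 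Combined with the Bochner-type identity
\begin{align*}
    \int_\Omega (\Delta v)^2 \dd x = \int_\Omega |D^2 v|^2 \dd x + \int_{\partial\Omega} \mathrm{II}(\nabla v, \nabla v) \dd S,
\end{align*}
whose boundary integrand is non-negative by convexity of $\Omega$ (valid since $\partial_n v = 0$), the inequality closes with the constant $1/(2d)$ in front of $\int(\Delta f(c_1^n))^2 \dd x$ and a residual $M$ depending only on the parameters of the problem and on the $H^1$-norm of $f(c_1^n)$ controlled by Lemma \ref{lem:classical}.

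Multiplying by $\tau$ and summing from $n = 1$ to $N$, the entropy terms telescope into $\ent(\bc^0) - \ent(\bc^N) + \sum_{n=1}^N [\ent([\bc^{n-1}]_\delta) - \ent(\bc^{n-1})]$. The first term is bounded since $c_i^0 \in L^\infty$, and the regularization error is of order $\delta N/\tau$, hence $O(N\tau)$ under the assumption $\delta \le \tau^2$ from \eqref{eq:taudelta}. Together with the $H^1$-bound \eqref{eq:totalbound} on $f(c_1^n)$, this yields \eqref{eq:fbound}. The homogeneous Neumann boundary conditions for $c_1^n$ and $f(c_1^n)$ can be read off from the Euler-Lagrange equation of \eqref{eq:JKO}: testing the first variation of $\nrg_{\tau,\delta}(\cdot;\bc^{n-1})$ against smooth $h$ with arbitrary boundary values yields a boundary integral $\int_{\partial\Omega} f'(c_1^n)\, \partial_n f(c_1^n)\, h \dd S$ that must vanish, forcing $\partial_n f(c_1^n) = 0$ and hence $\partial_n c_1^n = 0$ wherever $f'(c_1^n) > 0$. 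The main difficulty is concentrated in the second paragraph: the singularities of $f''$ and $f'''$ at the endpoints of $[0,1]$ make it delicate to justify the repeated integration by parts and to close the estimate with the correct coefficient, and one may need to argue via a preliminary regularization of $f$ whose effect is removed in the limit.
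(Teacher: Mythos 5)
Your overall strategy coincides with the paper's: flow interchange along the Neumann heat semigroup, the $\mathrm{EVI}_0$/entropy characterization of the heat flow, and the algebraic lower bound on $\int f'(c_1)\Delta c_1\,\Delta f(c_1)\dd x$ using concavity of $1/(f')^2$ together with the convexity inequality \eqref{eq:savare}. However, the pivotal step in your first paragraph is not justified as stated: you claim a ``direct computation of $\frac{\dn}{\dd t}\nrg(\bc(t))\big|_{t=0^+}$ via the chain rule'' leading to a displayed inequality that already contains $\Delta c_1^n$ and $\Delta f(c_1^n)$. At $t=0$ the minimizer only satisfies $f(c_1^n)\in H^1(\Omega)$; the $H^2$-regularity of $f(c_1^n)$ is precisely the conclusion of the lemma, $f'$, $f''$ are singular on the (possibly positive-measure) sets $\{c_1^n=0\}$, $\{c_1^n=1\}$, and the map $s\mapsto\nrgone(c_1^s)$ is only \emph{lower} semicontinuous at $s=0$, so the one-sided derivative of the energy need not exist or equal the formal expression. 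The paper never differentiates at $s=0$: it bounds the difference quotient in \eqref{eq:intermediate}, then applies the fundamental theorem of calculus on $(0,\bar s)$, where the heat flow is smooth and strictly inside $(0,1)$, to the \emph{regularized} energies built from $f_\eps$ (whose bounded $f_\eps'$ makes $s\mapsto f_\eps(c_1^s)$ continuous in $H^1$ down to $s=0$, and which satisfy $f_\eps'\le f'$, cf.\ \eqref{eq:fepsf}), and only afterwards sends $\eps\downarrow0$ and $\bar s\downarrow0$ using lower semicontinuity, arriving at \eqref{eq:fiA}. You do anticipate a regularization of $f$, but you locate the difficulty in the singular integrations by parts, whereas the regularization is needed already to make sense of, and to justify, the energy--dissipation identity near $s=0$. (A minor additional point: letting the residual $M$ depend on $\|f(c_1^n)\|_{H^1}$ and invoking \eqref{eq:totalbound} produces, after summation, a term of order $(N\tau)^2$; the paper instead absorbs the $\chi$-term into the dissipation as in \eqref{eq:chiest}, keeping the remainder constant and the bound of the form $C(1+N\tau)$.)

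The second genuine gap is your derivation of the Neumann boundary conditions from an Euler--Lagrange equation with boundary-touching variations. The boundary integral $\int_{\partial\Omega}f'(c_1^n)\,\nml\cdot\nabla f(c_1^n)\,h\dd S$ presupposes that the normal trace of $\nabla f(c_1^n)$ is defined, i.e.\ that $f(c_1^n)\in H^2(\Omega)$ --- circular at this stage --- and the first-variation computation with non-compactly supported $h$ is delicate where $c_1^n$ touches $0$ or $1$ (admissible perturbations are sign-constrained there and $f'(c_1^n)h$ is unbounded); note that Proposition~\ref{prp:EuLa} deliberately restricts to mean-zero variations compactly supported in the positivity set. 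The paper obtains both the $H^2$-regularity and the Neumann condition simultaneously and without any boundary variation: \eqref{eq:fiA} yields a sequence $s_k\downarrow0$ along which $\Delta f(c_1^{s_k})$ is bounded in $L^2(\Omega)$, hence $f(c_1^{s_k})\rightharpoonup f(c_1^n)$ weakly in $H^2(\Omega)$, and the homogeneous Neumann condition satisfied by the heat-flow approximations passes to the limit through the weak convergence of normal traces. You should replace your boundary-variation argument by this inheritance argument (or supply the missing trace and admissibility justifications).
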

\begin{rmk}
  If $1/(f')^2$ has a bounded derivative 
  --- as is the case for the $f$ from \eqref{eq:degenes} ---
  then one also obtains the analogous estimate as \eqref{eq:fbound} for $c_1$ itself in place of $f(c_1)$.
  Indeed, with $c_1=f^{-1}(f(c_1))$,
  \begin{align*}
    \Delta c_1 = \frac1{f'(c_1)}\Delta f(c_1) - \frac{f''(c_1)}{f'(c_1)^3}|\nabla f(c_1)|^2,
  \end{align*}
  with bounded factors
  \begin{align*}
    \frac1{f'} \quad\text{and}\quad
    -\frac{f''}{(f')^3}=\frac12\left(\frac1{(f')^2}\right)'.
  \end{align*}
  Combining this with the interpolation inequality
  \begin{align*}
    \|\nabla f\|_{L^4}^2\le 3\|f\|_{L^\infty}\|f\|_{H^2},
  \end{align*}
  that is easily derived using integration by parts,
  shows that $\|\Delta c_1\|_{L^2}^2\le C\|f(c_1)\|_{H^2}^2$,
  and therefore, see \eqref{eq:savare} below, also $\|c_1\|_{H^2}^2\le C\|f(c_1)\|_{H^2}^2$.
\end{rmk}
We divide the proof of Lemma \ref{lem:apriori} into two parts:
the first part contains the formal calculations --- for smooth and positive classical solutions to \eqref{eq:eqs} --- that lead to \eqref{eq:formalcrucial},
the second part is the fully rigorous justification of \eqref{eq:fbound} as a time-discrete version of \eqref{eq:formalcrucial},
using the flow interchange technique from \cite{MMS09}.
\begin{proof}[Formal calculation leading to \eqref{eq:formalcrucial}]
  Assume that a smooth and classical solution $\bc$ to \eqref{eq:eqs} with $0<c_1<1$ is given.
  We consider the dissipation of the entropy functional defined in \eqref{eq:ent}.
  We have, thanks to the no-flux and Neumann boundary conditions \eqref{eq:bc},
  \begin{equation}
    \label{eq:entdiss}
    \begin{split}
      -\frac{\dn}{\dd t}\ent(\bc)
      & = -\int_\Omega \left[\frac{\log c_1}{m_1}\,\partial_tc_1+\frac{\log c_2}{m_2}\,\partial_tc_2\right]\dd x \\
      & = \int_\Omega\big[\nabla c_1\cdot\nabla\mu_1 + \nabla c_2\cdot\mu_2]\dd x \\
      & = \int_\Omega \nabla c_1\cdot\nabla[\mu_1-\mu_2]\dd x \\
      &= \int_\Omega f'(c_1)\Delta c_1\Delta f(c_1)\dd x - \chi\int_\Omega|\nabla c_1|^2\dd x.    
    \end{split}
  \end{equation}
  We shall now use various manipulations to obtain a lower bound on
  \begin{align}
    \label{eq:J}
    J:=\int_\Omega f'(c_1)\Delta c_1\Delta f(c_1)\dd x.
  \end{align}
  On the one hand,
  \begin{align}
    \label{eq:Deltaf}
    \Delta f(c_1) = f'(c_1)\Delta c_1 + f''(c_1)|\nabla c_1|^2.
  \end{align}
  And on the other hand,
  thanks to the homogeneous Neumann boundary conditions from \eqref{eq:neumann}
  --- that are inherited from $c_1$ to $f(c_1)$ thanks to $0<c_1<1$ --- 
  and the convexity of $\Omega$,
  we have that (see e.g., \cite[Lemma 5.1]{GST})
  \begin{align}
    \label{eq:savare}
    \int_\Omega \big(\Delta f(c_1)\big)^2\dd x \ge \int_\Omega\|\nabla^2f(c_1)\|^2\dd x,
  \end{align}
  where $\|A\|=\sqrt{\tr(A^TA)}$ is the Frobenius norm of the square matrix $A$.
  Thus, we obtain 
  \begin{align}
    \label{eq:help002}
    J\ge\int_\Omega \left[\|\nabla^2f(c_1)\|^2-\frac{f''(c_1)}{f'(c_1)^2}|\nabla f(c_1)|^2\Delta f(c_1)\right]\dd x.
  \end{align}
  Now introduce $f,g:(0,1)\to\setR$ by
  \begin{align*}
    g(r):=\frac{f''(r)}{f'(r)^2}, 
    \quad
    h(r):= \frac{g'(r)}{f'(r)}, 
  \end{align*}
  and notice that
  \begin{align}
    \label{eq:AB}
    \nabla g(c_1) = h(c_1)\,\nabla f(c_1).
  \end{align}
  In the following, we write shortly $f$, $g$ and $h$ for $f(c_1)$, $g(c_1)$ and $h(c_1)$.
  Thanks again to the homogeneous Neumann boundary conditions and to \eqref{eq:AB},
  the divergence theorem implies that
  \begin{align*}
    0 &= \frac d{d+2}\int_\Omega \dv\big(g|\nabla f|^2\nabla f\big)\dd x \\
    &= \frac d{d+2}\int_\Omega \big[g\Delta f|\nabla f|^2+2g\nabla f\cdot\nabla^2f\nabla f+h|\nabla f|^4\big]\dd x.
  \end{align*}
  Adding the final integral expression to the right-hand side of \eqref{eq:help002} produces
  \begin{align*}
    J\ge
    \int_\Omega \left[\|\nabla^2f\|^2-\frac{2g}{d+2}\Delta f|\nabla f|^2
    +\frac{2dg}{d+2}\nabla f\cdot\nabla^2f\cdot\nabla f+\frac{dh}{d+2}|\nabla f|^4\right]\dd x.    
  \end{align*}
  Next, introduce the matrix-valued function $R:\Omega\to\setR^{d\times d}$ by
  \begin{align*}
    R:=\nabla^2f(c_1)-\frac{\Delta f(c_1)}{d}\eins_d.
  \end{align*}
  Then, using that $\tr\eins_d=d$ and $\tr R=0$,
  we obtain that 
  \begin{align*}
    \|\nabla^2f(c_1)\|^2 = \tr\left[\left(R+\frac{\Delta f(c_1)}{d}\eins_d\right)^2\right]
    =   \|R\|^2 + \frac{\left(\Delta f(c_1)\right)^2}{d}, 
  \end{align*}
  which allows to conclude that
  \begin{align*}
    J\ge
    \frac1d\int_\Omega (\Delta f)^2\dd x
    +\int_\Omega \left[\|R\|^2
    +\frac{2dg}{d+2}\nabla f\cdot R\cdot\nabla f+\frac{dh}{d+2}|\nabla f|^4\right]\dd x.    
  \end{align*}
  The last step is to verify that the expression inside the final integral is pointwise non-negative:
  \begin{align*}
    \|R\|^2 +\frac{2dg}{d+2}\nabla f\cdot R\cdot\nabla f+\frac{dh}{d+2}|\nabla f|^4
    = \left\| R + \frac{dg}{d+2}\nabla f\,\nabla f^T\right\|^2
    + \left[\frac{dh}{d+2}-\left(\frac{dg}{d+2}\right)^2\right]|\nabla f|^4.
  \end{align*}
  The squared norm is trivially non-negative.
  For the coefficient of the term $|\nabla f|^4$ to be non-negative,
  it suffices to have $h\ge g^2$.
  Since
  \begin{align*}
    h-g^2
    = \frac1{f'}\left(\frac{f''}{(f')^2}\right)' - \left(\frac{f''}{(f')^2}\right)^2
    = \frac{f'''}{(f')^3} - 3\frac{(f'')^2}{(f')^4}
    = -\frac12\left(\frac1{(f')^2}\right)'',
  \end{align*}
  the assumed concavity of $1/(f')^2$ is sufficient to guarantee $h\ge g^2$.
  In summary,
  \begin{align}
    \label{eq:J1}
    J\ge\frac1d\int_\Omega\Delta f(c_1)^2\dd x.
  \end{align}
  It remains to estimate the other integral.
  Recall that $f$ is continuous, and that $f'$ is positive with $1/(f')^2$ concave,
  so there is a constant $a>0$ such that $|f|\le a$ and $f'\ge a^{-1}$.
  Since $\Omega$ is bounded,
  and thanks to the Neumann boundary conditions \eqref{eq:neumann},
  \begin{equation}
    \label{eq:chiest}
    \begin{split}
      \chi\int_\Omega|\nabla c_1|^2\dd x
      \le a^2\chi\int_\Omega|\nabla f(c_1)|^2\dd x
      &= -a^2\chi\int_\Omega f(c_1)\Delta f(c_1)\dd x\\
      &\le a^2\chi\big(a^2|\Omega|\big)^{\frac12}\big(dJ\big)^{\frac12}
      \le \frac1{2}J+\frac{d\chi^2a^4}{2}|\Omega|.
    \end{split}
  \end{equation}
  Going back to \eqref{eq:entdiss},
  we arrive at \eqref{eq:formalcrucial}, or more specifically:
  \begin{align}
    \label{eq:algebraprefinal}
    -\frac{\dn}{\dd t}\ent(\bc)
    \ge \frac1{2d}\int_\Omega(\Delta f)^2\dd x - \frac{d\chi^2a^4}{2}|\Omega|.
  \end{align}
  An integration of this inequality in time
  provides
  \begin{align}
    \label{eq:algebrafinal}
    \int_0^T\int_\Omega(\Delta f)^2\dd x 
    \le 2d\big[\ent(\bc^0)-\ent\big(\bc(T)\big)\big]
    + d^2\chi^2a^4|\Omega|T.
  \end{align}
  Notice that the value of the entropy $\ent(\bc)$ is uniformly bounded from above and below
  for all $\bc\in\spc$.
  The estimate \eqref{eq:fbound} under consideration is a time-discrete version of \eqref{eq:algebrafinal},
  using that the integral over $\Delta f(c_1)$ on the left hand side
  yields control on the $H^2$-norm of $f(c_1)$
  by means of another application of \eqref{eq:savare}
  and interpolation with the trivial $L^\infty(L^2)$-bound on $c_1$. 
\end{proof}
\begin{proof}[Making the formal calculations rigorous]
  For each fixed $n$, we show the following time-step version of \eqref{eq:algebrafinal}:
  \begin{align}
    \label{eq:onestep}
    \tau\int_\Omega \big(\Delta f(c_1^n)\big)^2\dd x 
    \le 2d\big[\ent(\bc^{n-1})-\ent(\bc^n)\big] + \tau(d^2\chi^2a^4|\Omega|+K),
  \end{align}
  where $K$ is independent of $\tau$.
  With \eqref{eq:onestep} at hand, the estimate \eqref{eq:fbound} follows by summation over $n=1,2,\ldots,N$.

  The starting point for the derivation of \eqref{eq:onestep} is a particular variation
  of the minimizer  $\bc_\tau^n$ of $\nrg_{\tau,\delta}(\cdot;\bc_\tau^{n-1})$:
  consider the family $\bc^s=(c_1^s,c_2^s)\in\spc$,
  where $c_1^s$ and $c_2^s$ are the time-$s$-solutions to the heat flow
  on $\Omega$ for data $c_1^n$ and $c_2^n$, with homogeneous Neumann boundary conditions:
  \begin{subequations}\label{eq:heat}
    \begin{align}
      \label{eq:heat.eq}
      &\partial_s c_i^s = \Delta c_i^s \quad \text{for}\; (s,x) \in \setR_{>0} \times \Omega,\\
      \label{eq:heat.bc}
      & \nml\cdot\nabla c_i^s = 0 \quad \text{on}\;  \setR_{>0} \times \partial \Omega, \\
      \label{eq:heat.ic}
      & {c_i^s}_{|_{s=0}} = c_i^{n} \quad \text{in}\; \Omega.
    \end{align}
  \end{subequations}
  The pair $\bc^s=(c_1^s,c_2^s)$ has a variety of nice properties that facilitate the further analysis.
  Thanks to the smoothing effect of the heat equation,
  the map $(s,x)\mapsto c_i^s(x)$ is a $C^\infty$-function on $\setR_{>0}\times\Omega$,
  and it satisfies
  both the equation \eqref{eq:heat.eq} and the boundary condition \eqref{eq:heat.bc} in the classical sense.
  Moreover, one has $0<\inf_xc_i^s(x)\le\sup_xc^s_i(x)<1$ for each $s>0$,
  which implies that the map $(s;x)\mapsto f(c_i^s(x))$ inherits the $C^\infty$-smoothness
  as well as the homogeneous Neumann boundary conditions,
  \begin{align}
    \label{eq:heat.f}
    \nml\cdot\nabla f(c_i^s) \quad \text{on}\;  \setR_{>0} \times \partial \Omega.
  \end{align}
  Concerning the attainment of the initial condition \eqref{eq:heat.ic}:
  it follows from $\nrg(\bc^n)<\infty$ that $f(c_i^n)\in H^1(\Omega)$,
  and hence also $c_i^n\in H^1(\Omega)$ in view of Assumption \ref{asm}.
  This implies
  \begin{align}
    \label{eq:heatH1}
    c_i^s\to c_i^n \quad\text{in $H^1(\Omega)$ as $s\downarrow0$}.
  \end{align}
  Note, however, that we cannot conclude $f(c_i^s)\to f(c_i^n)$ in $H^1(\Omega)$ from here
  because of $f'(r)\to+\infty$ for $r\downarrow0$ and for $r\uparrow1$.
  Finally, the incompressibility constraint  is preserved,
  \begin{align}
    \label{eq:heatincompress}
    c_1^s+c_2^s=1.
  \end{align}
  There are many further possibilities for the perturbation $(\bc^s)$ that would share the aforementioned properties.
  Our motivation for the particular choice \eqref{eq:heat} is that
  solutions to the heat equation form a so-called $\mathrm{EVI}_0$-flow
  of the entropy $\tilde\ent$ in the $L^2$-Wasserstein metric \cite[Theorem 11.1.4]{AGS08};
  we emphasize that convexity of $\Omega$ is essential here.
  The $\mathrm{EVI}_0$-property means that $\setR_{\ge0}\ni s\mapsto\wass(c_i^s,[c_i^{n-1}]_\delta)^2$ is absolutely continuous
  --- and in particular differentiable at almost every $s>0$ --- 
  and that its derivative satisfies
  \begin{align}
    \label{eq:fromevi}
    \frac12\limsup_{s\downarrow0}\frac{\dn}{\dd s}\wass(c_i^s,[c_i^{n-1}]_\delta)^2
    \le \tilde\ent([c_i^{n-1}]_\delta)-\tilde\ent(c_i^n).
  \end{align}
  We combine \eqref{eq:fromevi} with the fact
  that $\nrg_{\tau,\delta}(\bc^s;\bc^{n-1})\ge\nrg_{\tau,\delta}(\bc^n;\bc^{n-1})$ by definition of $\bc^n$ as a minimizer.
  The latter can be equivalently formulated as
  \begin{align*}
    \nrg(\bc^n)-\nrg(\bc^s) \le \frac1{2\tau}\big[\dst(\bc^s,[\bc^{n-1}]_\delta)^2-\dst(\bc^n,[\bc^{n-1}]_\delta)^2\big].
  \end{align*}
  Plugging in the definition of $\dst$,
  dividing by $s>0$, and passing to the limit $s\downarrow0$ yields in view of \eqref{eq:fromevi}:
  \begin{align*}
    \limsup_{s\downarrow0}\frac{\nrg(\bc^n)-\nrg(\bc^s) }s
    & \le \frac1{2\tau}\limsup_{s\downarrow0}\frac{\dn}{\dd s}\Big(\dst(\bc^s,[\bc^{n-1}]_\delta)^2\Big) \\
    & \le \frac1{2m_1\tau}\limsup_{s\downarrow0}\frac{\dn}{\dd s}\wass(c_1^s;[c_1^{n-1}]_\delta)^2 
      +\frac1{2m_2\tau}\limsup_{s\downarrow0}\frac{\dn}{\dd s}\wass(c_2^s;[c_2^{n-1}]_\delta)^2 \\
    &\le \frac{\tilde\ent([c_1^{n-1}]_\delta)-\tilde\ent(c_1^n)}{m_1\tau}
      +\frac{\tilde\ent([c_2^{n-1}]_\delta)-\tilde\ent(c_2^n)}{m_2\tau} 
      = \frac{\ent([\bc^{n-1}]_\delta)-\ent(\bc^n)}\tau.
  \end{align*}
  For simplification of the left-hand side above,
  observe that $\nrg(\bc^n)-\nrg(\bc^s)=\nrgone(c_1^n)-\nrgone(c_1^s)$ thanks to \eqref{eq:heatincompress}.
  For further estimation of the right-hand side, 
  we use that $\ent$ is a non-negative convex functional,
  and thus
  \begin{align*}
    \ent([\bc^{n-1}]_\delta) \le (1-\delta)\ent(\bc^{n-1}) + \delta\ent(\brho)
    \le \ent(\bc^{n-1}) + K\delta,
  \end{align*}
  where $K=\ent(\brho)$ depends only on the parameters of the problem
  In summary, we have obtained so far that
  \begin{align}
    \label{eq:intermediate}
    \limsup_{s\downarrow0}\frac{\nrgone(c_1^n)-\nrgone(c_1^s) }s
    \le \frac{\ent(\bc^{n-1})-\ent(\bc^n)}\tau + K\frac\delta\tau.    
  \end{align}
  The remaining step is to derive a lower bound on the expression on the left-hand side in \eqref{eq:intermediate}
  of the same form as the right-hand side in \eqref{eq:algebraprefinal}.
  Ideally, we would like to express the left-hand side of \eqref{eq:intermediate}
  by means of the fundamental theorem of calculus as an average of $-\dn\nrgone(c_1^s)/\dn s$.
  The technical difficulty here is that $\nrgone(c_1^s)\to\nrgone(c_1^n)$ as $s\downarrow0$ might fail;
  note that Assumption \ref{asm} guarantees lower --- but a priori \emph{not upper} --- semi-continuity of $\nrgone$
  with respect to the $H^1$-convergence \eqref{eq:heatH1}.
  To overcome this, introduce for $\eps\in(0,1)$ the following approximations $f_\eps:[0,1]\to\setR$ of $f$:
  \begin{align*}
    f_\eps\left(\frac12+z\right) = (1-\eps)^{-1}f\left(\frac12+(1-\eps)z\right)
    \quad\text{for $-\frac12\le z\le\frac12$}.
  \end{align*}
  Thanks to Assumption \ref{asm}, $f_\eps'$ is positive, $1/(f_\eps')^2$ is concave, and $f_\eps(1-r)=-f_\eps(r)$.
  Moreover,
  since $f'(\frac12+z)$ is non-decreasing for $z>0$ and non-increasing for $z<0$
  thanks to concavity of $1/(f')^2$ and symmetry of $f'(r)$ about $r=\frac12$,
  it follows with $f_\eps'(\frac12+z)=f'(\frac12+(1-\eps)z)$ for all $z\in[-\frac12,\frac12]$ that
  \begin{align}
    \label{eq:fepsf}
    0<f_\eps'(r)\le f'(r) \quad\text{for all $r\in[0,1]$.}
  \end{align}
  Observe further that $f_\eps:[0,1]\to\setR$ is smooth up to the boundary, and in particular, $f_\eps'$ is bounded.
  Therefore, the desired continuity, i.e., $f_\eps(c_1^s)\to f_\eps(c^n_1)$ in $H^1(\Omega)$ as $s\downarrow0$,
  follows directly from \eqref{eq:heatH1}.

  From the smoothness of $c_1^s$ for $s>0$
  it follows in particular that $f_\eps(c_1^s)$ is a smooth curve in $H^1(\Omega)$ for $s>0$
  with $\partial_s f_\eps(c_1^s)=f'_\eps(c_i^s)\Delta c_1^s$.
  Observing further that $f_\eps(c_1^s)$ satisfies homogeneous Neumann boundary conditions since $c_1^s$ does,
  the fundamental theorem of calculus now implies for any $\bar s>0$ that
  \begin{align*}
    \frac12\int_\Omega |\nabla f_\eps(c_1^n)|^2\dd x
    - \frac12\int_\Omega |\nabla f_\eps(c_1^{\bar s})|^2\dd x
    &= - \int_0^{\bar s} \int_\Omega \nabla f_\eps(c_1^s)\cdot\nabla\partial_sf_\eps(c_1^s)\dd x\dd s \\
    &= \int_0^{\bar s}\int_\Omega f'_\eps(c_1^s)\Delta c_1^s\Delta f_\eps(c_1^s)\dd x\dd s.
  \end{align*}
  The integrand for the $s$-integral is of the form $J$ in \eqref{eq:J}.
  Since in the derivation of \eqref{eq:J1}, no property of $f$ other than smoothness, positivity of $f'$, and concavity of $1/(f')^2$ was used,
  the estimate \eqref{eq:J1} also holds with $f_\eps$ in place of $f$, i.e.,
  \begin{align*}
    \int_\Omega f'_\eps(c_1^s)\Delta c_1^s\Delta f_\eps(c_1^s)\dd x
    \ge \frac1d \int_\Omega \big[\Delta f_\eps(c_1^s)\big]^2\dd x
  \end{align*}
  for each $s>0$;
  the technical hypotheses for the derivation of \eqref{eq:J1} ---
  smoothness of $c_1^s$, the bounds $0<c_1^s<1$,
  and the homogeneous Neumann boundary conditions for $f(c_1^s)$ ---
  are guaranteed by the properties of the heat flow.

  Now we pass to the limit $\eps\downarrow0$.
  On the one hand, we can directly estimate
  $\int_\Omega |\nabla f_\eps(c_1^n)|^2\dd x \le \int_\Omega |\nabla f(c_1^n)|^2\dd x$ thanks to \eqref{eq:fepsf}.
  On the other hand, 
  using that $f_\eps(c_1^s)\to f(c_1^s)$ uniformly
  as well as the lower semi-continuity of the $H^1$- and the $H^2$-semi-norms with respect to convergence in measure,
  we finally arrive at
  \begin{align}
    \label{eq:fiA}
    \frac12\int_\Omega |\nabla f(c_1^n)|^2\dd x
    - \frac12\int_\Omega |\nabla f(c_1^{\bar s})|^2\dd x
    \ge \frac1d\int_0^{\bar s}\int_\Omega \big[\Delta f(c_1^s)\big]^2\dd x\dd s.
  \end{align}
  Another --- this time completely straight-forward --- application of the fundamental theorem of calculus provides
  \begin{equation}
    \label{eq:fiB}
    \begin{split}
      &\frac\chi2\int_\Omega c_1^n(1-c_1^n)\dd x
      - \frac\chi2\int_\Omega c_1^s(1-c_1^{\bar s})\dd x 
      = -\frac\chi2\int_0^{\bar s}\int_\Omega (1-2c_1^s)\partial_sc_1^s\dd x\dd s \\
      &= \chi\int_0^{\bar s}\int_\Omega|\nabla c_1^s|^2\dd x\dd s
      \ge -\int_0^{\bar s}\left(\frac1{2d}\int_\Omega \big[\Delta f(c_1^s)\big]^2\dd x
        + \frac{d\chi^2a^4|\Omega|}2\right) \dd s,
    \end{split}
  \end{equation}
  where we have derived the last estimate in analogy to \eqref{eq:chiest},
  with $a>0$ defined there.
  Summation of \eqref{eq:fiA} and \eqref{eq:fiB} yields
  \begin{align}
    \label{eq:1234}
    \int_0^{\bar s}\left(\frac1{2d}\int_\Omega \big[\Delta f(c_1^n)\big]^2\dd x - \frac{d\chi^2a^4|\Omega|}2\right)\dd s
    \le \nrgone(c_1^n)-\nrgone(c_1^{\bar s}).
  \end{align}
  We substitute this estimate into \eqref{eq:intermediate} and obtain,
  using again the lower semi-continuity of the $H^2$-semi-norm,
  \begin{align*}
    \frac1{2d}\int_\Omega \big[\Delta f(c_1^s)\big]^2\dd x - \frac{d\chi^2a^2|\Omega|^4}2
    \le \frac{\ent(\bc^{n-1})-\ent(\bc^n)}\tau + K\frac\delta\tau.
  \end{align*}
  Recalling \eqref{eq:taudelta}, this is \eqref{eq:onestep}.
  
  Concerning the boundary condition:
  the estimate \eqref{eq:fiA} implies in particular that there is a sequence $(s_k)$ of $s_k>0$ with $s_k\downarrow0$
  such that $\Delta f(c_1^{s_k})$ is bounded in $L^2(\Omega)$.
  This implies weak convergence of a further subsequence $f(c_1^{s_k'})$ to $f(c_1^n)$ in $H^2(\Omega)$,
  and this is sufficient to conclude that the normal trace $\nml\cdot\nabla f(c_1^{s_k'})$
  converges weakly in $L^2(\partial\Omega)$ to $\nml\cdot\nabla f(c_1^n)$.
  In particular, $f(c_1^{s_k'})$'s homogeneous Neumann boundary condition is inherited by $f(c_1^n)$,
  and by Assumption \ref{asm}, also $c_1^n$ itself satisfies homogeneous Neumann conditions.
\end{proof}
\begin{cor}
  There is a constant $C$, only depending on the parameters of the problem,
  such that, for all $N=1,2,\ldots$,
  \begin{align}
    \label{eq:F}
    \tau\sum_{n=1}^N\big\|\nonl[c_1^n]\big\|_{L^2}^2\le C(1+N\tau).
  \end{align}
\end{cor}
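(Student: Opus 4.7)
The plan is to derive the bound directly from the definition
$$\nonl[c_1^n] = \Delta f(c_1^n) + \chi\,\omega(c_1^n)\omega(c_2^n)\bigl(c_1^n-\smallhalf\bigr),$$
splitting $\nonl[c_1^n]$ into its ``principal'' and ``lower order'' parts and controlling each separately.

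First, by the elementary inequality $(a+b)^2\le 2a^2+2b^2$, one has pointwise
$$\bigl|\nonl[c_1^n]\bigr|^2 \le 2\bigl|\Delta f(c_1^n)\bigr|^2 + 2\chi^2\,\omega(c_1^n)^2\,\omega(c_2^n)^2\,\bigl(c_1^n-\smallhalf\bigr)^2.$$
The first term on the right, after integration over $\Omega$ and summation in $n$ with weight $\tau$, is bounded by Lemma~\ref{lem:apriori}: indeed $\|\Delta f(c_1^n)\|_{L^2}^2\le\|f(c_1^n)\|_{H^2}^2$, so
$$\tau\sum_{n=1}^N \int_\Omega \bigl|\Delta f(c_1^n)\bigr|^2\dd x \le \tau\sum_{n=1}^N\|f(c_1^n)\|_{H^2}^2 \le C(1+N\tau).$$

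For the second term, I would observe that $\omega:[0,1]\to\setR$ is continuous (as stated just before \eqref{eq:8}), hence bounded on $[0,1]$ by some constant $\|\omega\|_\infty$ depending only on $f$; moreover $|c_1^n-\smallhalf|\le\smallhalf$ because $c_1^n\in[0,1]$ pointwise (this follows from $c_1^n+c_2^n\equiv 1$ and the fact that both factors are nonnegative). Therefore the integrand is bounded by a constant depending only on the parameters of the problem, and so
$$\int_\Omega \chi^2\,\omega(c_1^n)^2\,\omega(c_2^n)^2\,\bigl(c_1^n-\smallhalf\bigr)^2\dd x \le \smallhalf\chi^2 \|\omega\|_\infty^4 |\Omega|.$$
Summing this uniform bound over $n=1,\ldots,N$ with weight $\tau$ contributes at most $C'\,N\tau$.

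Combining the two contributions yields \eqref{eq:F}. There is no real obstacle here: the estimate on $\Delta f(c_1^n)$ is the nontrivial ingredient and has already been secured in Lemma~\ref{lem:apriori}, while the subprincipal term is manifestly bounded thanks to the boundedness of $\omega$ and of $c_1^n$.
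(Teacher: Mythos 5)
Your argument is correct and follows essentially the same route as the paper: the paper likewise bounds $\|\nonl[c_1^n]\|_{L^2}$ by $\|\Delta f(c_1^n)\|_{L^2}+\chi\|\omega\|_{C^0}^2\|c_1^n-\frac12\|_{L^2}\le\|f(c_1^n)\|_{H^2}+M$ and then invokes the estimate \eqref{eq:fbound} of Lemma~\ref{lem:apriori}, with the uniform bound on the $\chi$-term coming from the boundedness of $\omega$ and $0\le c_1^n\le1$. The only cosmetic difference is that you square pointwise via $(a+b)^2\le 2a^2+2b^2$ while the paper uses the triangle inequality in $L^2$ first; both yield \eqref{eq:F} with a constant depending only on the parameters of the problem.
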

\begin{proof}
  This follows immediately from \eqref{eq:fbound}, since
  \begin{align*}
    \|\nonl[c_1^n]\|_{L^2} \le \big\|\Delta f(c_1^n)\big\|_{L^2}+\chi\|\omega\|_{C^0}^2\left\|c_1^n-\frac12\right\|_{L^2}
    \le \big\|f(c_1^n)\big\|_{H^2} + M,
  \end{align*}
  where $M$ only depends on the parameters of the problem.
\end{proof}

\section{A priori estimates on the auxiliary potentials}\label{sct:bpriori}
The aim of the current section is to derive --- on the basis of the estimates on $\bc^n$ ---
a priori estimates on the discrete approximation of the auxiliary functions $\bq^n$.
We start by showing that thanks to our construction of $\bc^n$ and $\bq^n$,
the constitutive equation \eqref{eq:wkconstr} holds with $\bc^n$ in place of the true solution $\bc$.
Recall the definition of $\nonl$ given there.
\begin{prp}
  \label{prp:EuLa}
  At each $n\geq 1$,
  \begin{align}
    \label{eq:EuLa}
    \omega(c_1^n)q_2^n-\omega(c_2^n)q_1^n = \nonl[c_1^n].
  \end{align} 
\end{prp}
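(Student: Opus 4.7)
The plan is to derive~\eqref{eq:EuLa} as the Euler--Lagrange equation for the minimization problem~\eqref{eq:JKO}. Since the only constraints baked into $\spc$ are $c_1 + c_2 \equiv 1$ and the fixed averages $\fint c_i\dd x = \rho_i$, the admissible variations of the minimizer $\bc^n$ are precisely those of the form $\bc_s = (c_1^n + s\eta,\, c_2^n - s\eta)$ with $\eta \in C^\infty(\overline\Omega)$ of zero mean. To sidestep the singular behaviour of $f'$ near $\{c_1^n \in \{0,1\}\}$, I would first restrict $\eta$ to be compactly supported in $\{\eps < c_1^n < 1-\eps\}$ for some $\eps > 0$.

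For the Wasserstein part of the first variation, the dual characterization~\eqref{eq:Wdual}, evaluated at the Kantorovich pair $(\varphi_i^n, \psi_i^n)$ (which remains admissible after the perturbation of the target), yields the standard formula
\begin{equation*}
  \frac{\dn}{\dn s}\bigg|_{s=0}\frac{1}{2\tau m_i}\wass^2\big(c_i^n + s\eta_i,\,[c_i^{n-1}]_\delta\big) = \int_\Omega \mu_i^n\, \eta_i\dd x,
\end{equation*}
with uniqueness of the potentials up to a global additive constant guaranteed by $[c_i^{n-1}]_\delta \geq \delta\rho_i > 0$. For the energy part, only $\nrgone(c_1)$ contributes (the indicator term is insensitive to admissible variations), and an integration by parts --- legitimate because $f(c_1^n) \in H^2(\Omega)$ satisfies homogeneous Neumann conditions by Lemma~\ref{lem:apriori} --- gives
\begin{equation*}
  \frac{\dn}{\dn s}\bigg|_{s=0}\nrgone(c_1^n + s\eta) = \int_\Omega \big({-}f'(c_1^n)\,\Delta f(c_1^n) + \chi\big(\tfrac12 - c_1^n\big)\big)\eta\dd x.
\end{equation*}
Setting the total first variation to zero for every zero-mean $\eta$ and invoking the fundamental lemma of the calculus of variations produces, pointwise on $\{\eps < c_1^n < 1-\eps\}$, an identity of the form $\mu_1^n - \mu_2^n = f'(c_1^n)\,\Delta f(c_1^n) + \chi(c_1^n - \tfrac12) + C_n$, with an additive constant $C_n$ reflecting the mass constraint.

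Multiplying through by the bounded, non-negative weight $\omega(c_1^n)\omega(c_2^n) = 1/f'(c_1^n)$ (see~\eqref{eq:8} combined with $c_2^n = 1-c_1^n$) absorbs the singular factor $f'(c_1^n)$ and, after recognising $q_i^n = \omega(c_i^n)\mu_i^n$, turns the above into $\omega(c_2^n)\,q_1^n - \omega(c_1^n)\,q_2^n = \nonl[c_1^n] + C_n\,\omega(c_1^n)\omega(c_2^n)$, an equality that now makes sense on all of $\Omega$ and extends there by continuity as $\eps \downarrow 0$. To pin down $C_n = 0$, I would integrate over $\Omega$: using $\int_\Omega \Delta f(c_1^n)\dd x = 0$ (Neumann condition), one sees that $C_n$ is proportional to $\int_\Omega [\psi_2^n - \psi_1^n - \chi(c_1^n - \tfrac12)]\omega(c_1^n)\omega(c_2^n)\dd x$, which is precisely what the second normalization in~\eqref{eq:Knormal} forces to vanish.

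The main technical obstacle is the singular nature of the bare chemical potentials $\mu_i^n$ near $\{c_1^n \in \{0,1\}\}$: the pointwise Euler--Lagrange equation can only be extracted on the interior region $\{\eps < c_1^n < 1-\eps\}$ through localized variations, and must subsequently be promoted to the full domain $\Omega$ by passage to the $q_i^n$ variables via the vanishing weight $\omega(c_1^n)\omega(c_2^n)$. The regularization~\eqref{eq:regul} of the previous step's density --- ensuring $[c_i^{n-1}]_\delta \geq \delta\rho_i > 0$ and hence uniqueness of the Kantorovich potentials up to a single additive constant --- is what makes the Wasserstein first-variation formula rigorously applicable in the first place.
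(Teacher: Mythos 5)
Your overall strategy coincides with the paper's: localized zero-mean variations of the JKO minimizer, the dual characterization for the Wasserstein term, multiplication by $\omega(c_1^n)\omega(c_2^n)$ to absorb the singular factor $f'(c_1^n)$, extension to all of $\Omega$, and the normalization \eqref{eq:Knormal} to kill the additive constant. However, your justification of the Wasserstein first-variation formula points in the wrong direction. Evaluating the dual problem \eqref{eq:Wdual} at the \emph{unperturbed} pair $(\varphi_i^n,\psi_i^n)$, which ``remains admissible after the perturbation of the target'', yields only the subgradient inequality $\tfrac12\wass(c_i^n+s\eta_i,[c_i^{n-1}]_\delta)^2\ge\tfrac12\wass(c_i^n,[c_i^{n-1}]_\delta)^2+s\int_\Omega\psi_i^n\eta_i\dd x$, i.e.\ a \emph{lower} bound on the perturbed distance. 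To exploit minimality of $\bc^n$ you need the opposite direction: an upper bound on the difference quotient of the penalization, so that $0\le\nrg_{\tau,\delta}(\bc_s;\bc^{n-1})-\nrg_{\tau,\delta}(\bc^n;\bc^{n-1})$ can be converted into $0\le s\,L(\eta)+o(s)$ and then, with $\eta\to-\eta$, into $L(\eta)=0$. The paper obtains this by testing the \emph{unperturbed} transport problem with the potentials $(\tilde\varphi_i^h,\tilde\psi_i^h)$ that are optimal for the \emph{perturbed} target, and then passing to the limit via the uniform stability of Kantorovich potentials \cite[Theorem 1.52]{Santa}; alternatively one can note that $s\mapsto\tfrac12\wass(c_i^n+s\eta_i,[c_i^{n-1}]_\delta)^2$ is convex (a supremum of functions affine in $s$) and use uniqueness of the potential, guaranteed by $[c_i^{n-1}]_\delta\ge\delta\rho_i>0$, to get genuine differentiability. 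Either ingredient is missing from your argument as written.

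The second gap is the passage from $P=\{0<c_1^n<1\}$ to all of $\Omega$. ``Extends there by continuity as $\eps\downarrow0$'' only promotes the identity from $\{\eps<c_1^n<1-\eps\}$ to $P$; it says nothing about $\Omega\setminus P$, which in the deep-quench regime is expected to have positive measure (pure phases). On that set one must verify that both sides vanish a.e.: the potential terms and the $\chi$-term do, since $\omega(0)=0$ and $q_i^n=\omega(c_i^n)\mu_i^n$, but the vanishing of $\Delta f(c_1^n)$ a.e.\ on $\{c_1^n\in\{0,1\}\}$ requires the fact that weak derivatives of the $H^2$-function $f(c_1^n)$ vanish a.e.\ on its level sets, which the paper invokes via \cite{Stampacchia65}. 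This is not cosmetic: your determination of $C_n=0$ integrates the identity over all of $\Omega$ and uses $\int_\Omega\Delta f(c_1^n)\dd x=0$, so it needs the identity on $\Omega$, not merely on $P$. With these two repairs your proof becomes essentially identical to the paper's.
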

\begin{proof}
  Thanks to the continuity of $\bc^n$ --- recall that $H^2(\Omega) \subset C(\overline\Omega)$ since $d\leq 3$ --- 
  the set $P=\{x\in\Omega\,|\,0<c_1^n(x)<1\}$ is open.
  Let $\eta\in C^\infty_c(\Omega)$ with support in $P$, and of vanishing mean, i.e., $\int_\Omega\eta(x)\dd x=0$.
  Define $\tilde\bc^h=(\tilde c_1^h,\tilde c_2^h)$ with $\tilde c_1^h:=c_1^n+h\eta$ and $\tilde c_2^h:=c_2^n-h\eta$
  for all $h>0$ sufficiently small such that $\tilde\bc^h\in\spc$.  
  Then $\nrg_{\tau,\delta}(\tilde \bc^h;\bc^{n-1})\ge\nrg_{\tau,\delta}(\bc^n;\bc^{n-1})$ by definition of $\bc^n$ as a global minimizer.

  Recall that the $(\varphi_i^n,\psi_i^n)$ are the (uniquely determined since $[c_i^{n-1}]_\delta >0$) pairs of Kantorovich potentials
  for the optimal transport from $[c_i^{n-1}]_\delta$ to $c_i^n$ normalized by \eqref{eq:Knormal}.
  Analogously, let $(\tilde\varphi_i^h,\tilde\psi_i^h)$ be the (still uniquely determined) pair of potentials
  for the optimal transport from $[c_i^{n-1}]_\delta$ to $\tilde c_i^h$,
  normalized such that $\varphi_i^h(\bar x)=\varphi_i^n(\bar x)$ for all $h>0$ at some arbitrarily chosen $\bar x\in\Omega$.
  By the stability of optimal pairs, see e.g. \cite[Theorem 1.52]{Santa},
  it follows that $\tilde\varphi_i^h\to\varphi_i^n$ and $\tilde\psi_i^h\to\psi_i^n$ uniformly on $\Omega$ as $h\to0$.
  
  Using the dual characterization \eqref{eq:Wdual} of the Wasserstein distance, we obtain:
  \begin{align*}
    &\int_\Omega \big(\tilde\psi_1^hc_1^n + \tilde\varphi_1^h[c_1^{n-1}]_\delta +  \tilde\psi_2^hc_2^n + \tilde\varphi_2^h[c_2^{n-1}]_\delta\big)\dd x
    + \nrgone(c_1^n) \\
    &\le \int_\Omega \big[\psi_1^n c_1^n + \varphi_1^n[c_1^{n-1}]_\delta +  \psi_2^n c_2^n + \varphi_2^n[c_2^{n-1}]_\delta\big]\dd x
      + \nrgone(c_1^n) \\
    &\quad = \nrg_{\tau,\delta}(\bc^n;\bc^{n-1}) \\
    &\quad\quad \le \nrg_{\tau,\delta}(\tilde\bc^h;\bc^{n-1}) \\
    &\quad\quad\quad = \int_\Omega \big(\tilde\psi_1^h\tilde c_1^h + \tilde\varphi_1^h[c_1^{n-1}]_\delta
      +  \tilde\psi_2^h\tilde c_2^h + \tilde\varphi_2^h[c_2^{n-1}]_\delta\big)\dd x
      + \nrgone(\tilde c_1^h).
  \end{align*}
  Subtracting the first line from the ultimate one, and dividing by $h>0$ yields:
  \begin{equation}
    \label{eq:forEL}
    \begin{split}
      0 \le
      &\frac1h\int_\Omega \big(\tilde\psi_1^h (\tilde c_1^h-c_1^n)+ \tilde\psi_2^h(\tilde c_2^h-c_2^n)\big) \dd x
      + \frac{\nrgone(\tilde c_1^h)-\nrgone(c_1^n)}h \\
      &= \int_\Omega (\tilde\psi_1^h-\tilde\psi_2^h)\eta \dd x + \int_\Omega \frac{|\nabla f(\tilde c_1^h)|^2-|\nabla f(c_1^n)|^2}{2h}\dd x
      + \chi\int_\Omega \frac{\tilde c_1^h\tilde c_2^h-c_1^nc_2^n}{2h}\dd x.
    \end{split}
  \end{equation}
  On the one hand, it follows immediately by boundedness of $\eta$ that
  \begin{align}
    \label{eq:chih}
    \int_\Omega \frac{\tilde c_1^h\tilde c_2^h-c_1^nc_2^n}{2h}\dd x
    = \int_\Omega \frac{h(c_2^n-c_1^n)\eta-h^2\eta^2}{2h}\dd x
    \to \frac12\int_\Omega(c_2^n-c_1^n)\eta\dd x
    \quad\text{as $h\downarrow0$}.
  \end{align}
  On the other hand, thanks to the elementary inequality $|a|^2-|b|^2\le 2a\cdot(a-b)$ for vectors $a,b\in\setR^d$,
  and by the homogeneous Neumann boundary conditions satisfied by $f(c_1^n)$ and hence also by $f(\tilde c_1^h)$,
  we have that
  \begin{align*}
    \int_\Omega \frac{|\nabla f(\tilde c_1^h)|^2-|\nabla f(c_1^n)|^2}{2h}\dd x
    &\le \int_\Omega \nabla f(\tilde c_1^h)\cdot\nabla\left[\frac{f(\tilde c_1^h)-f(c_1^n)}h\right]\dd x \\
    &= -\int_\Omega \Delta f(\tilde c_1^h)\, \frac{f(\tilde c_1^h)-f(c_1^n)}h\dd x.
  \end{align*}
  On the compact support $K\subset P$ of $\eta$,
  we have $\kappa\le c_1^n\le1-\kappa$ for a suitable constant $\kappa>0$.
  We further have $\kappa/2\le\tilde c_1^h\le1-\kappa/2$ for all sufficiently small $h>0$.
  By smoothness of $f$ on $[\kappa/2,1-\kappa/2]$ thanks to Assumption \ref{asm},
  it follows that 
  \begin{align}
    \label{eq:fprimeh}
    \frac{f(\tilde c_1^h)-f(c_1^n)}h \to f'(c_1^n)\eta \quad\text{uniformly as $h\downarrow0$}.
  \end{align}
  And it further follows that also
  \begin{align}
    \label{eq:Deltafh}
    \Delta f(\tilde c_1^h) \to \Delta f(c_1^n) \quad \text{strongly in $L^2(\Omega)$ as $h\downarrow0$}
  \end{align}
  because of the following.
  We know from Lemma \ref{lem:apriori} that $f(c_1^n)$ lies in $H^2(\Omega)$,
  i.e., has square integrable first and second order derivatives.
  Again thanks to Assumption \ref{asm}, $f$ has a smooth inverse $f^{-1}$ on $[f(\kappa),f(1-\kappa)]$.
  By the chain rule for the concatenation of Sobolev functions with smooth maps,
  it follows that $c_1^n=f^{-1}(f(c_1^n))$ has square integrable first and second order weak derivatives on $K$.
  By smoothness of $\eta$,
  the first and second order derivatives of $c_1^n$ are uniformly approximated
  by the respective ones of $\tilde c_1^h$ on $K$.
  Using again the smoothness of $f$ on $[\kappa/2,1-\kappa/2]$,
  we conclude uniform approximation of $\Delta f(c_1^n)$ by $\Delta f(\tilde c_1^h)$ as $h\downarrow0$.
  Now \eqref{eq:Deltafh} follows since $\tilde c_1^h=c_1^n$ in $\Omega\setminus K$.
  Plugging \eqref{eq:chih}, \eqref{eq:fprimeh} and \eqref{eq:Deltafh} into \eqref{eq:forEL},
  we obtain in the limit $h\downarrow0$ that
  \begin{align*}
    0 \le \int_\Omega \big[\psi_1^n-\psi_2^n-f'(c_1^n)\Delta f(c_1^n)-\chi \big(c_1^n-\smallhalf\big)\big]\eta\dd x.
  \end{align*}
  The same inequality is true also for $-\eta$ in place of $\eta$, and thus is an equality.
  Since $\eta$ was an arbitrary test function with support in $P$ of zero average,
  there is a constant $A$ such that
  \begin{align*}
    \psi_1^n-\psi_2^n+A = f'(c_1^n)\Delta f(c_1^n) + \chi \big(c_1^n-\smallhalf\big)
  \end{align*}
  holds a.e.\ on $P$.
  Multiplication by $1/f'(c_1^n) = \omega(c_1^n)\omega(c_2^n)$ leads to
  \begin{align}
    \label{eq:ELintermediate2}
    \omega(c_1^n)\omega(c_2^n)[\psi_1^n-\psi_2^n+A] = \Delta f(c_1^n) + \chi \big(c_1^n-\smallhalf\big) \omega(c_1^n)\omega(c_2^n).
  \end{align}
  On the complement $\Omega\setminus P$, where either $c_1^n=0$ or $c_2^n=0$,
  the left-hand side of \eqref{eq:ELintermediate2} above vanishes a.e.\ because of $\omega(0)=0$,
  and for the same reason, the second term on the right-hand side vanishes as well.
  Also $\Delta f(c_1^n)$ vanishes a.e., because $f(c_1^n)\in H^2(\Omega)$,
  and so all of its first and second order weak partial derivatives are zero a.e.\ on the level sets \cite{Stampacchia65}.
  That is, the validity of \eqref{eq:ELintermediate2} extends from $P$ to all of $\Omega$.

  Now integrate \eqref{eq:ELintermediate2} on $\Omega$ to obtain:
  \begin{align*}
    \int_\Omega\frac{\psi_1^n-\psi_2^n+\chi \big(c_1^n-\smallhalf\big)+A}{f'(c_1^n)}\dd x
    = \int_\Omega \Delta f(c_1^n)\dd x = 0,
  \end{align*}
  where we have used that $f(c_1^n)$ satisfies homogeneous Neumann boundary conditions.
  In view of the normalization \eqref{eq:Knormal}, it follows that $A=0$.
  Finally, recalling the definition~\eqref{eq:q1} of $q_1$ and $q_2$,
  the claim of the lemma now follows from \eqref{eq:ELintermediate2}.
\end{proof}
With the consitutive relation \eqref{eq:EuLa} at hand,
we can now make the idea outlined in Section \ref{ssct:q} of the introduction rigorous
and prove $\tau$-uniform integrability of the $q_i^n$.
In the following, let
\begin{align}
  \label{eq:pd}
  p_d := \frac{d}{d-1} = 
  \begin{cases}
    2 & \text{if $d=2$}, \\
    3/2 & \text{if $d=3$}.
  \end{cases}
\end{align}
\begin{lem}\label{lem:qest}
  There is a constant $C$, only depending on the parameters of the problem,
  such that, for all $N=1,2,\ldots$,
  \begin{align}
    \label{eq:qest}
    \tau \sum_{n=1}^N\left(\|q_1^n\|_{L^{p_d}}^2+\|q_2^n\|_{L^{p_d}}^2\right)\le C(1+N\tau).
  \end{align}
\end{lem}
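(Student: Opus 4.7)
The plan is to follow the strategy sketched in Section \ref{ssct:q}, now made rigorous at the time-discrete level thanks to the constitutive relation \eqref{eq:EuLa} just proved. Define the discrete average chemical potential
\[
\bar\mu^n := \alpha(c_1^n)q_1^n + \alpha(c_2^n)q_2^n = c_1^n\mu_1^n + c_2^n\mu_2^n,
\]
which, by the second normalization in \eqref{eq:Knormal} translated via $\mu_i^n=\psi_i^n/(m_i\tau)$, has vanishing mean. Using \eqref{eq:EuLa} to solve for $\mu_1^n-\mu_2^n$ in terms of $\nonl[c_1^n]$ and then extracting $\mu_1^n$ from the definition of $\bar\mu^n$, a direct computation yields the discrete analogue of \eqref{eq:q1alt},
\[
q_1^n = \omega(c_1^n)\,\bar\mu^n - \alpha(c_2^n)\,\nonl[c_1^n],
\]
with the symmetric formula for $q_2^n$. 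Since $\omega$ and $\alpha$ are bounded on $[0,1]$, and $p_d\le 2$ so $\|\nonl[c_1^n]\|_{L^{p_d}}\le C\|\nonl[c_1^n]\|_{L^2}$ on the bounded domain $\Omega$, the proof reduces to estimating $\bar\mu^n$ in $L^{p_d}(\Omega)$ uniformly (in a time-summed $L^2$-sense).

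To that end, I would compute $\nabla\bar\mu^n$ by the manipulation leading to \eqref{eq:nablabarmu}:
\[
\nabla\bar\mu^n = \sqrt{c_1^n}\big(\sqrt{c_1^n}\nabla\mu_1^n\big)+\sqrt{c_2^n}\big(\sqrt{c_2^n}\nabla\mu_2^n\big)+\nabla f(c_1^n)\,\nonl[c_1^n].
\]
Each of the three terms is controlled in $L^1(\Omega)$ by Cauchy--Schwarz: the first two against themselves, giving $\|\sqrt{c_i^n}\nabla\mu_i^n\|_{L^2}^2$, which is precisely the density appearing in the kinetic bound \eqref{eq:kinetic} (after rewriting $\nabla\mu_i^n=\nabla\psi_i^n/(m_i\tau)$); the third as $\|\nabla f(c_1^n)\|_{L^2}\|\nonl[c_1^n]\|_{L^2}$, where the first factor is controlled uniformly in $n$ by \eqref{eq:totalbound} and the second factor by \eqref{eq:F} in an $\ell^2_\tau$-sense. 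Putting these together, squaring, and summing over $n$ produces
\[
\tau\sum_{n=1}^N \|\nabla\bar\mu^n\|_{L^1}^2 \le C(1+N\tau).
\]

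Since $\bar\mu^n$ has zero mean on the bounded Lipschitz domain $\Omega$, the Poincaré--Sobolev embedding $W^{1,1}(\Omega)\hookrightarrow L^{p_d}(\Omega)$ gives $\|\bar\mu^n\|_{L^{p_d}}\le C\|\nabla\bar\mu^n\|_{L^1}$, so the previous bound upgrades to
\[
\tau\sum_{n=1}^N \|\bar\mu^n\|_{L^{p_d}}^2 \le C(1+N\tau).
\]
Inserting this and \eqref{eq:F} into the representation of $q_i^n$ above yields \eqref{eq:qest}.

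The main obstacle is handling the three contributions to $\nabla\bar\mu^n$ cleanly: one has to Cauchy--Schwarz the kinetic term in the right way so that the time-summed $\ell^2_\tau$ structure is preserved, and one has to combine the $L^\infty_t H^1_x$-bound on $f(c_1^n)$ with the $\ell^2_\tau L^2_x$-bound on $\nonl[c_1^n]$ to reach $\ell^2_\tau L^1_x$ for the cross term. The remaining steps (representation of $q_i^n$, mean-zero normalization, Sobolev embedding) are routine once \eqref{eq:EuLa}, \eqref{eq:kinetic}, \eqref{eq:totalbound} and \eqref{eq:F} are in hand.
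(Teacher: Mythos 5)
Your argument is essentially the paper's own proof: the same $\bar\mu^n=\alpha(c_1^n)q_1^n+\alpha(c_2^n)q_2^n$ with zero mean, the same representation of $q_i^n$ via $\bar\mu^n$ and $\nonl[c_1^n]$, the same $L^1$-estimate of $\nabla\bar\mu^n$ from \eqref{eq:kinetic}, \eqref{eq:totalbound}, \eqref{eq:F}, and the same Poincar\'e--Sobolev step $W^{1,1}(\Omega)\hookrightarrow L^{p_d}(\Omega)$. One small correction: the vanishing mean of $\bar\mu^n$ comes from the \emph{first} normalization in \eqref{eq:Knormal}, not the second (the second is what fixes the constant $A=0$ in Proposition~\ref{prp:EuLa}); the sign in front of $\alpha(c_2^n)\nonl[c_1^n]$ is immaterial for the bound.
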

\begin{proof}
  We introduce the quantity
  \begin{align}
    \label{eq:LA1}
    \bar\mu^n := \frac{c_1^n\psi_1^n}{m_1\tau}+\frac{c_2^n\psi_2^n}{m_2\tau}
    = \alpha(c_1^n)q_1^n + \alpha(c_2^n)q_2^n,
  \end{align}
  where the equality follows by definition~\eqref{eq:q1} of the $q_i^n$, and since $\alpha(r)\omega(r)=r$.
  We notice further that,
  by the normalization \eqref{eq:Knormal},
  \begin{align}
    \label{eq:flat}
    \int_\Omega\bar\mu^n\dd x = 0.
  \end{align}
  Next, we recall that
  \begin{align}
    \label{eq:LA2}
    \nonl[c_1^n] = \omega(c_2^n)q_1^n-\omega(c_1^n)q_2^n
  \end{align}
  by Proposition~\ref{prp:EuLa}.
  Multiply \eqref{eq:LA1} by $\omega(c_1^n)$ and \eqref{eq:LA2} by $\alpha(c_2^n)$,
  then the sum amounts to
  \begin{align*}
    \omega(c_1^n)\bar\mu^n+\alpha(c_2^n)\nonl[c_1^n] = (c_1^n+c_2^n) q_1^n = q_1^n.
  \end{align*}
  Similarly, we obtain for $q_2^n$:
  \begin{align*}
    \omega(c_2^n)\bar\mu^n-\alpha(c_1^n)\nonl[c_1^n] = (c_1^n+c_2^n) q_2^n = q_2^n.
  \end{align*}
  Below, we show that
  \begin{align}
    \label{eq:mubarest}
    \tau\sum_{n=1}^N\big\|\bar\mu^n\big\|_{L^{p_d}}^2\le C(1+N\tau),    
  \end{align}
  which in combination with the bound \eqref{eq:F} on $\nonl[c_1^n]$, 
  and the fact that $\alpha$ and $\omega$ are bounded functions,
  yields \eqref{eq:qest}.
  
  To obtain \eqref{eq:mubarest},
  we estimate the gradient of $\bar\mu$ in $L^2(0,T;L^1(\Omega))$.
  From the definition of $\bar\mu^n$ in \eqref{eq:LA1} and the fact that $\nabla c_2^n=-\nabla c_1^n$,
  it follows that
  \begin{align}
    \label{eq:mumu}
    \nabla\bar\mu^n
    = \nabla c_1^n\left(\frac{\psi_1^n}{m_1\tau}-\frac{\psi_2^n}{m_2\tau}\right)
    + \frac{c_1^n}{m_1}\frac{\nabla\psi_1^n}\tau + \frac{c_1^n}{m_2}\frac{\nabla\psi_2^n}\tau.
  \end{align}
  We treat the two groups of terms on the right hand side separately.
  For estimation of the first term, we observe that $\omega(c_1^n)\omega(c_2^n)\nabla f(c_1^n)=\nabla c_1^n$,
  since $\omega(c_1^n)\omega(c_2^n)f'(c_1^n)=1$ on the positivity set $P:=\{0<c_1^n<1\}$,
  and both sides vanish a.e.\ on the complement $\Omega\setminus P$.
  Therefore, also recalling~\eqref{eq:LA2} again,
  \begin{align*}
    \nabla c_1^n\left(\frac{\psi_1^n}{m_1\tau}-\frac{\psi_2^n}{m_2\tau}\right)    
    = \nabla f(c_1^n)\big[\omega(c_2^n)q_1^n-\omega(c_1^n)q_2^n\big]
    = \nabla f(c_1^n)\, \nonl[c_1^n],
  \end{align*}
  hence it follows that
  \begin{align*}
    \int_\Omega\left|\nabla c_1^n\left(\frac{\psi_1^n}{m_1\tau}-\frac{\psi_2^n}{m_2\tau}\right)\right|\dd x
    \le \|\nabla f(c_1^n)\|_{L^2}\|\nonl[c_1^n]\|_{L^2}.
  \end{align*}
  The second group of terms on the right hand-side of \eqref{eq:mumu}
  is estimated by means of H\"older's inequality,
  \begin{align*}
    \int_\Omega\left| \frac{c_1^n}{m_1}\frac{\nabla\psi_1^n}\tau + \frac{c_1^n}{m_2}\frac{\nabla\psi_2^n}\tau \right|\dd x
    \le K\left[\int_\Omega\left(\frac{c_1^n}{m_1}\left|\frac{\nabla\psi_1^n}\tau\right|^2 + \frac{c_2^n}{m_2}\left|\frac{\nabla\psi_2^n}\tau\right|^2\right)\dd x\right]^{1/2},
  \end{align*}
  with a $K$ that only depends on the parameters of the problem.
  Thanks to the normalization \eqref{eq:flat},
  it follows by means of the Poincare-Wirtinger inequality that
  \begin{align*}
    \tau\sum_{n=1}^N\big\|\bar\mu^n\big\|_{L^{p_d}}^2
    \le   &\;  C\tau\sum_{n=1}^N\left[\int_\Omega|\nabla\bar\mu^n|\dd x\right]^2\dd t \\
    \le    &  \;C\sup_n\|\nabla f(c_1^n)\|_{L^2}^2 \tau\sum_{n=1}^N\|\nonl[c_1]\|_{L^2}^2 \\
          & \;+
            CK^2 \tau\sum_{n=1}^N \int_\Omega
            \left(\frac{c_1^n}{m_1}\left|\frac{\nabla\psi_1^n}\tau\right|^2 + \frac{c_2^n}{m_2}\left|\frac{\nabla\psi_2^n}\tau\right|^2\right)\dd x,
  \end{align*}
  with a constant $C$ that only depends on the geometry of $\Omega$.
  And so, recalling the estimates \eqref{eq:totalbound} on $\nabla f(c_1^n)$ in $L^2$,
  \eqref{eq:F} on $\nonl[c_1^n]$ in $L^2$, and \eqref{eq:kinetic} on the $\psi_i^n$ in a weighted $H^1$-norm,
  we arrive at~\eqref{eq:mubarest}. 
\end{proof}

\section{Convergence and conclusion of the proof of Theorem \ref{thm:main}}
\label{sct:convergence}
In this final secion, we show that the time-discrete approximations $(\bc_\tau^n)$ and $(\bq_\tau^n)$
converge to a weak solutions of the initial boundary value problem \eqref{eq:eqs}--\eqref{eq:ic}
in the sense of Theorem \ref{thm:main}.
First, introduce the usual piecewise constant interpolations in time
$\bar\bc^\tau=(\bar c_1^\tau,\bar c_2^\tau)$ and $\bar\bq^\tau=(\bar q_1^\tau,\bar q_2^\tau)$
with $\bar c_i^\tau\in L^\infty(\Omega_T)$ and $\bar q_i^\tau\in L^{p_d}(\Omega_T)$
by
\begin{align*}
  \bar c_i^\tau(t;\cdot)=c_i^n,\quad
  \bar q_i^\tau(t;\cdot)=q_i^n \quad
  \text{for all $t$ with $(n-1)\tau<t\le n\tau$}.
\end{align*}
Recall that $d=2$ or $d=3$, and the definition \eqref{eq:pd} of $p_d$.
\begin{lem}\label{lem:compact}
  There are functions
  $c_1,c_2\in L^\infty_\loc(\setR_{\ge0};H^1(\Omega))$ with $f(c_1),f(c_2)\in L^2_\loc(\setR_{\ge0};H^2(\Omega))$,
  and $q_1,q_2\in L^{p_d}(\setR_{\ge0}\times\Omega)$
  such that, for each $T>0$,
  in the limit $\tau\downarrow0$, at least along a suitable sequence,
  \begin{align}
    \label{eq:qconv}
    \bar q_i^\tau&\rightharpoonup q_i \quad \text{weakly in $L^{p_d}(\Omega_T)$}, \\
    \label{eq:cconv}
    \bar c_i^\tau&\to c_i \quad \text{strongly in $L^r(\Omega_T)$, for each $1\le r<\infty$}, \\
    \label{eq:fconv}
    \nabla f(\bar c_i^\tau)&\to \nabla f(c_i) \quad \text{strongly in $L^{24/7}(\Omega_T)$}, \\
    \label{eq:fconv2}
    f(\bar c_i^\tau) & \rightharpoonup f(c_i) \quad \text{weakly in $L^2(0,T;H^2(\Omega))$}.
  \end{align}
  Moreover, the limits $c_i$ are H\"older continuous as curves in $L^2(\Omega)$.
\end{lem}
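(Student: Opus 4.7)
The proof combines the three a priori estimates of the preceding sections with standard weak and strong compactness tools. To begin, the $L^{p_d}(\Omega_T)$ bound on $\bar\bq^\tau$ from Lemma~\ref{lem:qest} together with reflexivity (Banach--Alaoglu) yields, along a subsequence, weak convergence $\bar q_i^\tau\rightharpoonup q_i$ in $L^{p_d}(\Omega_T)$. Likewise, the $L^2(0,T;H^2(\Omega))$ bound on $f(\bar c_i^\tau)$ from Lemma~\ref{lem:apriori} gives weak convergence in that space to a limit that will be identified as $f(c_i)$ shortly.

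For the strong convergence of the volume fractions, I would invoke a Simon-type compactness theorem for piecewise constant interpolants. The ingredients are the uniform $L^\infty(0,T;H^1(\Omega))$ bound from~\eqref{eq:totalbound}, which via Rellich--Kondrachov provides instantaneous compactness in $L^2(\Omega)$, together with the discrete H\"older modulus~\eqref{eq:holderL2}, which yields equicontinuity in $L^2(\Omega)$ uniformly in $\tau$. These hypotheses produce strong $L^2(\Omega_T)$ convergence $\bar c_i^\tau\to c_i$ along a subsequence; interpolation with the uniform $L^\infty$ bound $\bar c_i^\tau\in[0,1]$ then upgrades this to strong $L^r(\Omega_T)$ convergence for every finite $r$. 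By continuity of $f$ on $[0,1]$, it follows that $f(\bar c_i^\tau)\to f(c_i)$ strongly in every $L^r(\Omega_T)$, which identifies the weak $L^2 H^2$ limit as $f(c_i)$. The H\"older continuity of $c_i$ as a curve into $L^2(\Omega)$ is then obtained by passing to the limit in~\eqref{eq:holderL2} along the convergent subsequence.

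The technically most delicate step is the strong $L^{24/7}(\Omega_T)$ convergence of $\nabla f(\bar c_i^\tau)$, which I would treat by Gagliardo--Nirenberg interpolation. Setting $v^\tau=f(\bar c_i^\tau)-f(c_i)$, the inequality (valid for $d\le3$)
\begin{align*}
\|\nabla v^\tau\|_{L^{24/7}(\Omega)}\le C\,\|v^\tau\|_{L^{12}(\Omega)}^{1/2}\,\|v^\tau\|_{H^2(\Omega)}^{1/2},
\end{align*}
raised to the $24/7$-th power, integrated in time, and combined via H\"older's inequality with conjugate exponents $(7,7/6)$ gives
\begin{align*}
\|\nabla v^\tau\|_{L^{24/7}(\Omega_T)}\le C\,\|v^\tau\|_{L^{12}(\Omega_T)}^{1/2}\,\|v^\tau\|_{L^2(0,T;H^2(\Omega))}^{1/2}.
\end{align*}
The first factor tends to zero by the strong $L^{12}$ convergence established in the previous paragraph, while the second is uniformly bounded by Lemma~\ref{lem:apriori}, yielding the claim. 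The main obstacle I anticipate is applying the correct piecewise-constant variant of Simon's theorem and locating the right endpoint $L^{12}$ in the interpolation; both are standard but require some bookkeeping.
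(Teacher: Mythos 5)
Your proposal is correct and follows essentially the same route as the paper: weak compactness of $\bar q_i^\tau$ from the $L^{p_d}$ bound, an Aubin--Lions/Simon-type compactness argument built on the uniform $L^\infty(0,T;H^1)$ bound and the discrete quasi-H\"older modulus \eqref{eq:holderL2} (the paper cites the Rossi--Savar\'e generalization, which accommodates the extra $\tau$ in the modulus -- exactly the ``piecewise-constant variant'' you flag), continuity of $f$ to identify the limits, and the same Gagliardo--Nirenberg interpolation with the $L^{12}$ endpoint and $(7,7/6)$ H\"older split for the $L^{24/7}$ convergence of the gradients.
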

\begin{proof}
  Ad \eqref{eq:qconv}:
  recall that \eqref{eq:qest} provides a $\tau$-uniform bound on $\bar q_i^\tau$ in $L^{p_d}(\Omega_T)$.
  Since this space is reflexive, there exist subsequences with respective weak limits.

  Ad \eqref{eq:cconv}:
  from \eqref{eq:totalbound} and the fact that $f'(r)\ge f'(1/2)>0$ thanks to Assumption \ref{asm},
  it follows for $i=1,2$, and for any $T>0$ that
  \begin{align}
    \label{eq:cboundrepeat}
    \|\bar c_i^\tau\|_{L^\infty(0,T;H^1(\Omega))}^2\dd t \le K
  \end{align}
  with a bound $K$ that might depend on $T$, but is independent of $\tau$.
  Moreover, \eqref{eq:holderL2} shows that the same sequences satisfy a uniform quasi-H\"older estimate in time,
  \begin{align}
    \label{eq:holderrepeat}
    \sup_{0<s<t<T}\big\|\bar c_i^\tau(s)-\bar c_i^\tau(t)\big\|_{L^2(\Omega)} \le C\big(\tau+|t-s|\big)^{1/4}.
  \end{align}
  We can thus invoke the generalized version of the Aubin-Lions compactness lemma from \cite[Theorem 2]{Rossi}.
  There, we choose $L^2(\Omega)$ as the base space.
  The role of the coercive integrand is played by the $H^1(\Omega)$-norm
  --- whose sublevels are clearly compact in $L^2(\Omega)$ by Rellich's theorem ---
  so that \eqref{eq:cboundrepeat} amounts to the required integral bound.
  Moreover, almost-continuity in time is guaranteed by \eqref{eq:holderrepeat}.
  We thus conclude strong convergence of the $\bar c_i^\tau$ to respective limits $c_i$ in $L^2(\Omega_T)$.
  And thanks to the uniform bound $0\le\bar c_i^\tau\le 1$,
  this implies strong convergence in \emph{any} $L^r(\Omega_T)$ with $r<\infty$.
  Moreover, these limits belong to $L^\infty(0,T;H^1(\Omega))$ by lower semi-continuity of the $H^1$-norm,
  and are H\"older continuous curves with respect to $L^2(\Omega)$,
  again thanks to \eqref{eq:cboundrepeat} and \eqref{eq:holderrepeat} above.

  Ad \eqref{eq:fconv}:
  since $f:[0,1]\to\setR$ is a continuous function, we conclude
  that also $f(\bar c_i^\tau)$ converges to the respective $f(c_i)$ in any $L^q(\Omega_T)$ with $q<\infty$.
  Further, observe that \eqref{eq:fbound} implies that
  \begin{align}
    \label{eq:fboundrepeat}
    \|f(\bar c_i^\tau)\|_{L^2(0,T;H^2(\Omega))}^2 \le K
  \end{align}
  with a constant $K$ that might depend on $T$, but not on $\tau$.
  Thanks to lower semi-continuity of the $H^2$-norm,
  it follows that $f(c_i)\in L^2(0,T;H^2(\Omega))$ satisfies the same bound \eqref{eq:fboundrepeat}. 
  We are now going to show that this implies convergence
  of $\nabla f(\bar c_i^\tau)$ to $\nabla f(c_i)$ in $L^{24/7}(\Omega_T)$.
  By the Gagliardo-Nirenberg and H\"older's inequality, we have (independently of the dimension $d$):
  \begin{align*}
    &\big\|\nabla\big[f(\bar c_i^\tau)-f(c_i)\big]\big\|_{L^{24/7}(\Omega_T)}^{24/7}
    = \int_0^T\big\|\nabla\big[f(\bar c_i^\tau)-f(c_i)\big]\big\|_{L^{24/7}(\Omega)}^{24/7}\dd t \\
    &\le C \int_0^T \big(\|f(\bar c_i^\tau)\|_{H^2(\Omega)}+\|f(c_i)\|_{H^2(\Omega)}\big)^{12/7}
      \|f(\bar c_i^\tau)-f(c_i)\|_{L^{12}(\Omega)}^{12/7}\dd t\\
    &\le C \left(\int_0^T\big[\|f(\bar c_i^\tau)\|_{H^2(\Omega)}^2+\|f(c_i)\|_{H^2(\Omega)}^2\big]\dd t\right)^{6/7}
      \left(\int_0^T\|f(\bar c_i^\tau)-f(c_i)\|_{L^{12}(\Omega)}^{12}\dd t\right)^{1/7} \\
    &\le C(2K)^{6/7} \|f(\bar c_i^\tau)-f(c_i)\|_{L^{12}(\Omega_T)}^{12/7},
  \end{align*}
  where $K$ is the bound from \eqref{eq:fboundrepeat}.
  Therefore, convergence of $f(c_i^\tau)$ carries over to convergence of $\nabla f(c_i^\tau)$.
\end{proof}
Having proven the existence of limits $\bc$ and $\bq$,
we shall now verify that these satisfy the equations \eqref{eq:wfcont} and \eqref{eq:diff_pot}.
The proof of \eqref{eq:wfcont} is divided into two steps:
in Lemma \ref{lem:wfcont1} below, we derive a discrete-in-time version of the continuity equation \eqref{eq:wfcont},
and in the subsequent Lemma \ref{lem:wfcont2}, we pass to the limit $\tau\downarrow0$.
\begin{lem}
  \label{lem:wfcont1}
  Let $\zeta\in C^\infty(\overline\Omega)$ satisfy homogeneous Neumann boundary conditions.
  Then
  \begin{align}
    \label{eq:dweak}
    \int_\Omega \zeta\frac{c_1^n-c_1^{n-1}}\tau\dd x
    = - m_1\int_\Omega q_1^n\,\big[\alpha(c_1^n)\Delta\zeta
    +\omega(1-c_1^n)\,\nabla f(c_1^n)\cdot\nabla\zeta\big]\dd x
    + \tau\err^n[\zeta],
  \end{align}
  where the error term satisfies
  \begin{align}
    \label{eq:error}
    |\err^n[\zeta]| \le \frac12\|\zeta\|_{C^2}\int_\Omega c_1^n\left|\frac{\nabla\psi_1^n}\tau\right|^2\dd x
    + \|\zeta\|_{C^0}|\Omega|.
  \end{align}
\end{lem}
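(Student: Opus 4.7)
The strategy is to exploit the optimal-transport identity~\eqref{eq:push}, $[c_1^{n-1}]_\delta = (\id - \nabla\psi_1^n)\#c_1^n$, which by definition of push-forward means
\begin{equation*}
  \int_\Omega \zeta\,[c_1^{n-1}]_\delta\,\dd x = \int_\Omega \zeta\bigl(x - \nabla\psi_1^n(x)\bigr)\,c_1^n(x)\,\dd x
\end{equation*}
for every $\zeta \in C^2(\overline\Omega)$. Since $\Omega$ is convex and the optimal map $T(x) = x - \nabla\psi_1^n(x)$ takes values in $\overline\Omega$ for $c_1^n$-a.e.\ $x$, the segment from $x$ to $T(x)$ lies in $\overline\Omega$ at every point relevant for the integral, and Taylor's theorem with Lagrange remainder yields
\begin{equation*}
  \zeta(x) - \zeta\bigl(x - \nabla\psi_1^n(x)\bigr) = \nabla\zeta(x)\cdot\nabla\psi_1^n(x) - \tfrac12\bigl(\nabla\psi_1^n(x)\bigr)^T\nabla^2\zeta(\xi)\,\nabla\psi_1^n(x)
\end{equation*}
for some $\xi$ on that segment. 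Multiplying by $c_1^n$, integrating, combining with the splitting $c_1^n - c_1^{n-1} = (c_1^n - [c_1^{n-1}]_\delta) - \delta(c_1^{n-1}-\rho_1)$ from~\eqref{eq:regul}, dividing by $\tau$, and using $\nabla\psi_1^n/\tau = m_1\nabla\mu_1^n$ gives
\begin{equation*}
  \int_\Omega \zeta\,\frac{c_1^n - c_1^{n-1}}{\tau}\,\dd x = m_1\int_\Omega c_1^n\,\nabla\zeta\cdot\nabla\mu_1^n\,\dd x + \tau\,\err^n[\zeta],
\end{equation*}
where $\tau\,\err^n[\zeta]$ collects the quadratic Taylor remainder (divided by $\tau$) and the term $-(\delta/\tau)\int_\Omega\zeta(c_1^{n-1}-\rho_1)\,\dd x$.

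The second step is to recast the leading term in the form prescribed by~\eqref{eq:dweak}. Using $\mu_1^n = q_1^n/\omega(c_1^n)$, the identity $\alpha(r)=r/\omega(r)$, and $\nabla c_1^n = \omega(c_1^n)\omega(1-c_1^n)\nabla f(c_1^n)$, the same algebra that led to~\eqref{eq:2weak} yields
\begin{equation*}
  c_1^n\,\nabla\mu_1^n = \nabla\bigl[\alpha(c_1^n)\,q_1^n\bigr] - q_1^n\,\omega(1-c_1^n)\,\nabla f(c_1^n).
\end{equation*}
Now $c_1^n \in H^1(\Omega)\cap L^\infty(\Omega)$ and the Kantorovich potential $\psi_1^n$ is globally Lipschitz on $\overline\Omega$ (since its transport map takes values in $\overline\Omega$, which forces $|\nabla\psi_1^n|\le\operatorname{diam}(\Omega)$ wherever the gradient exists), so $\mu_1^n$ is Lipschitz and the product $\alpha(c_1^n)\,q_1^n = c_1^n\,\mu_1^n$ belongs to $H^1(\Omega)$. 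Integration by parts against the Neumann test function $\zeta$ is then legitimate and produces
\begin{equation*}
  m_1\int_\Omega c_1^n\,\nabla\zeta\cdot\nabla\mu_1^n\,\dd x = -m_1\int_\Omega q_1^n\bigl[\alpha(c_1^n)\,\Delta\zeta + \omega(1-c_1^n)\,\nabla f(c_1^n)\cdot\nabla\zeta\bigr]\,\dd x,
\end{equation*}
which is the main term of~\eqref{eq:dweak}.

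For the error bound~\eqref{eq:error}, the Taylor remainder is dominated pointwise by $\tfrac12\|\zeta\|_{C^2}c_1^n|\nabla\psi_1^n|^2$, whose integral divided by $\tau^2$ is exactly the first summand of~\eqref{eq:error}. The $\delta$-correction obeys $(\delta/\tau^2)\bigl|\int_\Omega\zeta(c_1^{n-1}-\rho_1)\,\dd x\bigr| \le (\delta/\tau^2)\|\zeta\|_{C^0}|\Omega|$, and the standing hypothesis $\delta\le\tau^2$ from~\eqref{eq:taudelta} reduces this to $\|\zeta\|_{C^0}|\Omega|$. The only subtle point is the legitimacy of the integration by parts in the presence of $q_1^n$, which is merely of class $L^{p_d}$ with no direct bound on $\nabla q_1^n$; the trick is to avoid differentiating $q_1^n$ in isolation and instead rely on the algebraic cancellation $\alpha(c_1^n)q_1^n = c_1^n\mu_1^n$, which transfers all the required regularity onto $c_1^n$ and the Lipschitz Kantorovich potential.
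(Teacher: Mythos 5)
Your proof is correct and follows essentially the same route as the paper: the push-forward identity \eqref{eq:push} plus a second-order Taylor expansion of $\zeta$ along the transport rays, the splitting off of the $\delta$-regularization term, and the algebraic identities $\alpha(c_1^n)q_1^n=c_1^n\mu_1^n$ and $\mu_1^n\nabla c_1^n=q_1^n\,\omega(1-c_1^n)\nabla f(c_1^n)$ (vanishing a.e.\ on the degenerate set). The only cosmetic difference is the order of operations --- the paper integrates by parts on $\int\nabla\zeta\cdot\nabla\psi_1^n\,c_1^n\dd x$ first and then substitutes the pointwise identities \eqref{eq:lame1}--\eqref{eq:lame2}, whereas you apply the product-rule algebra first and then integrate by parts, justified in the same way via the Lipschitz regularity of $\psi_1^n$ and $c_1^n\in H^1\cap L^\infty$.
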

\begin{proof}
  Recalling the representation \eqref{eq:push} of $c_1^{n-1}$ as push-foward of $c_1^n$,
  we obtain
  \begin{align*}
    \int_\Omega \zeta\frac{c_1^n-c_1^{n-1}}\tau\dd x
    &=\int_\Omega \zeta\frac{c_1^n-[c_1^{n-1}]_\delta}\tau\dd x
      + \int_\Omega\zeta\frac{[c_1^{n-1}]_\delta-c_1^{n-1}}\tau\dd x\\
    &=\frac1\tau\int_\Omega\big[\zeta-\zeta\circ(\id-\nabla\psi_1^n)\big]c_1^n\dd x
      + \frac\delta\tau\int_\Omega \zeta(\rho_1-c_1^{n-1})\dd x \\
    &=\int_\Omega\left[\nabla\zeta\cdot\left(\frac{\nabla\psi_1^n}\tau\right)
      +\frac{\tau}2\left(\frac{\nabla\psi_1^n}\tau\right)^T\cdot\widehat{\nabla^2\zeta}\cdot\left(\frac{\nabla\psi_1^n}\tau\right)\right]c_1^n\dd x 
      + \frac\delta\tau\int_\Omega \zeta(\rho_1-c_1^{n-1})\dd x \\
    &=\int_\Omega\nabla\zeta\cdot\left(\frac{\nabla\psi_1^n}\tau\right)c_1^n\dd x + \tau\err^n[\zeta] .
  \end{align*}
  Above, $\widehat{\nabla^2\zeta}$ is the average of the Hessian $\nabla^2\zeta$
  along the straight line segment joining $x$ to $x-\nabla\psi_1^n(x)$.
  Consequently, also using that $|c_1^{n-1}-\rho_1|\le 1$ and that $\delta\le\tau^2$ by \eqref{eq:taudelta},
  we obtain the estimate \eqref{eq:error} on $\err^n[\zeta]$.
  Now integrate by parts in the final integral above,
  \begin{align}
    \label{eq:444}
    \int_\Omega\nabla\zeta\cdot\left(\frac{\nabla\psi_1^n}\tau\right)c_1^n\dd x
    = - \int_\Omega \left[ \frac{\psi_1^n}\tau c_1^n \Delta\zeta^n+ \left(\frac{\psi_1^n}\tau\nabla c_1^n\right)\cdot\nabla\zeta\right]\dd x.
  \end{align}
  We rewrite the integral on the right-hand side.
  First, observe that
  \begin{align}
    \label{eq:lame1}
    c_1^n\psi_1^n = m_1\tau\alpha(c_1^n)q_1^n,
  \end{align}
  using on $\{c_1^n>0\}$ that $q_1^n=\omega(c_1^n)\psi_1^n/(m_1\tau)$ by definition,
  and on $\{c_1^n=0\}$ that both sides are zero, thanks to $\alpha(0)=0$.
  And second, observe that
  \begin{align}
    \label{eq:lame2}
    \nabla c_1^n\,\psi_1^n = m_1\tau\omega(c_2^n)\nabla f(c_1^n)\,q_1^n,
  \end{align}
  since $\nabla c_1^n=\omega(c_1^n)\omega(c_2^n)\nabla f(c_1^n)$
  on the positivity set $P=\{0<c_1^n<1\}$ by the fact that $f'(r)\omega(r)\omega(1-r)$ for $0<r<1$,
  and on the complement $\Omega\setminus P$ by the fact that both $\nabla c_1^n$ and $\nabla f(c_1^n)$ vanish a.e.
  Substitution of \eqref{eq:lame1}\&\eqref{eq:lame2} in \eqref{eq:444} yields \eqref{eq:dweak}.
\end{proof}
\begin{lem}
  \label{lem:wfcont2}
  For all test functions $\xi\in C^\infty_{c,n}(\setR_{>0}\times\Omega)$,
  \begin{align}
    \label{eq:13}
    \int_0^\infty\int_\Omega\left[
    -\partial_t\xi\,c_1+m_1\left(\Delta\xi\,\alpha(c_1)q_1
    + \nabla\xi\cdot\nabla f(c_1)\,\omega(1-c_1)q_1\right)
    \right]\dd x\dd t
    = 0.
  \end{align}
\end{lem}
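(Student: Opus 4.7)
The plan is to pass to the limit $\tau\downarrow 0$ in the discrete identity \eqref{eq:dweak} from Lemma \ref{lem:wfcont1}. Given $\xi \in C^\infty_{c,n}(\setR_{>0}\times\Omega)$ with time-support in a compact interval $[t_a,t_b]\subset(0,\infty)$, I would substitute $\zeta=\xi(t_n,\cdot)$ in \eqref{eq:dweak}, multiply by $\tau$, and sum over $n=1,\ldots,N$ for some $N$ large enough that $\xi(t_N,\cdot)\equiv 0$; for $\tau$ small also $\xi(t_1,\cdot)\equiv 0$.

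For the left-hand side, Abel summation (using $\xi(t_N)=\xi(t_1)=0$) converts the discrete time derivative of $c_1$ into a difference quotient of $\xi$:
\begin{align*}
\sum_{n=1}^N \int_\Omega \xi(t_n)\,(c_1^n-c_1^{n-1})\dd x
= -\int_0^\infty\int_\Omega \frac{\xi(t+\tau)-\xi(t)}{\tau}\,\bar c_1^\tau(t,x)\dd x\dd t.
\end{align*}
Smoothness of $\xi$ yields uniform convergence of the difference quotient to $\partial_t\xi$, while Lemma \ref{lem:compact} gives strong convergence $\bar c_1^\tau\to c_1$ in $L^2(\Omega_T)$. Combining both, the limit is $-\int_0^\infty\int_\Omega \partial_t\xi\,c_1\dd x\dd t$, which is the first term of \eqref{eq:13}.

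For the right-hand side, the sum rewrites as
\begin{align*}
-m_1\int_0^\infty\int_\Omega \bar q_1^\tau\bigl[\alpha(\bar c_1^\tau)\,\Delta\bar\xi^\tau + \omega(1-\bar c_1^\tau)\,\nabla f(\bar c_1^\tau)\cdot\nabla\bar\xi^\tau\bigr]\dd x\dd t,
\end{align*}
with $\bar\xi^\tau$ the piecewise constant interpolant of $\xi$. Since $\bar q_1^\tau\rightharpoonup q_1$ weakly in $L^{p_d}(\Omega_T)$ by \eqref{eq:qconv}, it suffices to show that the bracket converges strongly in the conjugate space $L^{p_d'}(\Omega_T)$ with $p_d'=d\in\{2,3\}$. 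Continuity and boundedness of $\alpha,\omega$ on $[0,1]$ combined with the strong convergence $\bar c_1^\tau\to c_1$ give strong convergence of $\alpha(\bar c_1^\tau)$ and $\omega(1-\bar c_1^\tau)$ in every $L^r(\Omega_T)$; \eqref{eq:fconv} yields strong convergence of $\nabla f(\bar c_1^\tau)$ in $L^{24/7}(\Omega_T)$; and $\Delta\bar\xi^\tau,\nabla\bar\xi^\tau$ converge uniformly to $\Delta\xi,\nabla\xi$ by smoothness of $\xi$. Since $24/7 \ge 3 \ge p_d'$, the bracket converges in $L^{p_d'}(\Omega_T)$, which combined with the weak $L^{p_d}$ convergence of $\bar q_1^\tau$ delivers the two remaining terms of \eqref{eq:13}.

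It then remains to control the accumulated error $\tau^2\sum_{n=1}^N\err^n[\xi(t_n)]$. From \eqref{eq:error} together with the uniform bound $\|\xi\|_{C^2}\le C$, the kinetic energy estimate \eqref{eq:kinetic}, and the fact that only $N\le(t_b/\tau)+1$ time steps lie in the support, both contributions to $\tau^2\sum_n|\err^n[\xi(t_n)]|$ are $O(\tau)$ and thus vanish in the limit. The most delicate ingredient of the whole argument is the \emph{strong} $L^{24/7}$ convergence of $\nabla f(\bar c_1^\tau)$: weak convergence of the gradients alone would be insufficient to handle the cubic coupling $\bar q_1^\tau\,\nabla f(\bar c_1^\tau)\cdot\nabla\bar\xi^\tau$ in dimension $d=3$, which is precisely why \eqref{eq:fconv} was proved via Gagliardo-Nirenberg interpolation between the $L^\infty(H^1)$ and $L^2(H^2)$ bounds.
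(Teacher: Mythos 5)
Your proposal is correct and follows essentially the same route as the paper: sum the discrete identity \eqref{eq:dweak} against $\xi(n\tau,\cdot)$, perform summation by parts in time, dispose of the accumulated error via \eqref{eq:error} together with \eqref{eq:kinetic} and \eqref{eq:taudelta}, and pass to the limit by pairing the weak convergence \eqref{eq:qconv} of $\bar q_1^\tau$ with the strong convergences \eqref{eq:cconv} and \eqref{eq:fconv}. The only cosmetic differences are that the paper uses the piecewise linear interpolant $\hat\xi^\tau$ so that the discrete time difference equals $\partial_t\hat\xi^\tau$ exactly (your continuous difference quotient introduces a harmless $O(\tau)$ discrepancy), and it groups the weak--strong pairing as $\bar q_1^\tau\,\nabla f(\bar c_1^\tau)\rightharpoonup q_1\nabla f(c_1)$ in $L^{24/23}(\Omega_T)$ rather than putting the whole bracket into the dual space $L^{p_d'}(\Omega_T)$ as you do.
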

\begin{proof}
  Introduce $\zeta^n(x):=\xi(n\tau;x)$ for $n=1,2,\ldots$,
  and the following
  piecewise constant and piecewise linear in time approximations $\xi^\tau$ and $\hat\xi^\tau$ of $\xi$,
  respectively, by:
  \begin{align*}
    \bar\xi^\tau(t;\cdot) = \zeta^n
    \quad\text{and}\quad
    \hat\xi^\tau(t;\cdot) = \frac{t-(n-1)\tau}\tau\zeta^{n+1} + \frac{n\tau-t}\tau\zeta^{n}
    \quad \text{for all $t\in((n-1)\tau,n\tau]$}.
  \end{align*}  
  Use $\zeta^n$ for $\zeta$ in \eqref{eq:dweak}, sum over $n$:
  \begin{align*}
    \tau\sum_n\err^n[\zeta_n]
    &= \tau\sum_n \int_\Omega \left[c_1^n\frac{\zeta^{n}-\zeta^{n+1}}\tau
      + \Delta\zeta^n\alpha(c_1^n)q_1^n + \nabla\zeta^n\cdot\nabla f(c_1^n)\,\omega(1-c_1^n)q_1^n \right]\dd x \\
    &= \int_0^\infty\int_\Omega \left[\bar c_1^\tau \partial_t\hat\xi^\tau
      +\Delta\bar\xi^\tau\alpha(\bar c_1^\tau)\bar q_1^\tau
      + \nabla\bar\xi^\tau\cdot\nabla f(\bar c_1^\tau)\,\omega(1-\bar c_1^\tau)\bar q_1^\tau \right]\dd x\dd t.
  \end{align*}
  We pass to the limit $\tau\downarrow0$ on both sides.
  On the left-hand side, we have thanks to \eqref{eq:error} and \eqref{eq:classical},
  \begin{align*}
    \tau\sum_{n=1}^N\big|\err^n[\zeta_n]\big|
   & \; \le \frac{m_1\|\xi\|_{C^2}}2\tau\sum_{n=1}^N\int_\Omega \left|\frac{\nabla\psi_1^n}{\tau}\right|\frac{c_1^n}{m_1}\dd x
    +\tau(N\tau)|\Omega|\|\xi\|_{C^0} \\
   &\; \le 2m_1\|\xi\|_{C^2}\nrg(\bc^0)\,\tau + \tau|\Omega_T|\big(m_1+\|\xi\|_{C^0}\big),
  \end{align*}
  which converges to zero as $\tau\to0$.
  On the right-hand side, we use that $\partial_t\hat\xi^\tau\to\partial_t\xi$ as well as
  $\nabla\bar\xi^\tau\to\nabla\xi$ and $\Delta\bar\xi^\tau\to\Delta\xi$ uniformly.
  Moreover, by \eqref{eq:cconv}, and since $\alpha,\omega:[0,1]\to\setR$ are continuous,
  we have in particular that
  \begin{align*}
    \alpha(\bar c_1^\tau)\to\alpha(c_1)
    \quad\text{and}\quad
    \omega(1-\bar c_1^\tau) \to \omega(1-c_1)
  \end{align*}
  in $L^{24}(\Omega_T)$.
  In view of \eqref{eq:qconv} and \eqref{eq:fconv}, 
  \begin{align*}
    \bar q_1^\tau\nabla f(\bar c_1^\tau) \rightharpoonup q_1\nabla f(c_1)
    \quad \text{in $L^{24/23}(\Omega_T)$}.
  \end{align*}
  Therefore, the integral converges.
\end{proof}
The purpose of the next and final lemma is to derive the constitutive equations~\eqref{eq:c1+c2=1_cont} and \eqref{eq:diff_pot}.
\begin{lem}
  \label{lem:easyconv}
  Let $c_i$ and $q_i$ be as in Lemma~\ref{lem:compact},
  then $c_1+c_2 = 1$ and
  \begin{align}
    \label{eq:diffpotrepeat}
    \omega(c_1)\,q_2 - \omega(c_2)\,q_1 = \Delta f(c_1) + \chi \textstyle{\left(c_1-\smallhalf \right)}\omega(c_1)\omega(c_2).    
  \end{align}
\end{lem}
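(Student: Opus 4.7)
The plan is to pass to the limit $\tau \downarrow 0$ in the time-discrete identities already established: the constraint $c_1^n+c_2^n=1$ from Lemma~\ref{lem:existence}, and the Euler--Lagrange relation $\omega(c_1^n)q_2^n - \omega(c_2^n)q_1^n = \nonl[c_1^n]$ from Proposition~\ref{prp:EuLa}. Both hold a.e.\ on $\Omega$ for every $n$, hence, for the piecewise-constant interpolants,
\begin{align*}
\bar c_1^\tau + \bar c_2^\tau &= 1, \\
\omega(\bar c_1^\tau)\bar q_2^\tau - \omega(\bar c_2^\tau)\bar q_1^\tau
&= \Delta f(\bar c_1^\tau) + \chi\,\omega(\bar c_1^\tau)\omega(\bar c_2^\tau)\textstyle\left(\bar c_1^\tau - \smallhalf\right)
\end{align*}
a.e.\ on $\Omega_T$ for every $T>0$.

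The constraint $c_1+c_2=1$ is immediate from the strong $L^r(\Omega_T)$-convergence \eqref{eq:cconv} of $\bar c_i^\tau$ to $c_i$.

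For the second identity, I would test against an arbitrary $\phi \in C_c^\infty(\Omega_T)$ and pass to the limit term by term. The key observation is that, after extracting a subsequence, $\bar c_i^\tau \to c_i$ a.e.\ on $\Omega_T$; since $\omega:[0,1]\to\setR$ is bounded and continuous, dominated convergence yields $\omega(\bar c_i^\tau)\to \omega(c_i)$ strongly in every $L^r(\Omega_T)$ with $r<\infty$. Combining this strong convergence with the weak $L^{p_d}(\Omega_T)$-convergence of $\bar q_i^\tau$ from \eqref{eq:qconv} provides
$$
\omega(\bar c_1^\tau)\bar q_2^\tau - \omega(\bar c_2^\tau)\bar q_1^\tau
\rightharpoonup \omega(c_1)q_2 - \omega(c_2)q_1
$$
in $L^{p_d}(\Omega_T)$, say, after pairing with any bounded test function. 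On the right-hand side, \eqref{eq:fconv2} gives $\Delta f(\bar c_1^\tau) \rightharpoonup \Delta f(c_1)$ weakly in $L^2(\Omega_T)$, while the polynomial-times-$\omega$ term $\omega(\bar c_1^\tau)\omega(\bar c_2^\tau)(\bar c_1^\tau - \smallhalf)$ converges strongly in every $L^r(\Omega_T)$ by the same dominated convergence argument, hence in particular in duality with $\phi$.

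Passing to the limit in $\int_{\Omega_T}[\,\cdot\,]\phi\dd x\dd t$ and using that $\phi \in C_c^\infty(\Omega_T)$ was arbitrary yields the identity \eqref{eq:diffpotrepeat} in the sense of distributions; since every term on both sides belongs to some $L^p(\Omega_T)$, the equation then holds a.e.\ on $\Omega_T$, and letting $T\to\infty$ extends it to all of $\setR_{\ge 0}\times\Omega$. The only subtlety worth checking is the compatibility of integrability exponents in the product $\omega(\bar c_i^\tau)\bar q_j^\tau$: because $\omega$ is bounded, strong convergence in $L^{p_d'}$ (the conjugate exponent) against weak convergence in $L^{p_d}$ is enough, and this is available since the $\omega(\bar c_i^\tau)$ converge strongly in every finite Lebesgue space. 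No other obstacles arise.
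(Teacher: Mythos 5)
Your proposal is correct and follows essentially the same route as the paper: both pass to the limit in the discrete constraint from Lemma~\ref{lem:existence} and in the Euler--Lagrange identity of Proposition~\ref{prp:EuLa}, using the strong convergence \eqref{eq:cconv} (hence strong convergence of $\omega(\bar c_i^\tau)$ by continuity and boundedness of $\omega$) paired with the weak convergences \eqref{eq:qconv} and \eqref{eq:fconv2}. Your extra step of testing against $\phi\in C_c^\infty(\Omega_T)$ and then upgrading the distributional identity to an a.e.\ one is a harmless reformulation of the paper's direct weak-$L^1$ limit of the products.
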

\begin{proof}
  Because of~\eqref{eq:c1+c2=1}, we have $\bar c_1^\tau +\bar c_2^\tau =1$,
  which clearly yields $c_1+c_2 = 1$ in the limit, using the strong convergence from~\eqref{eq:cconv}.

  Next, recall that \eqref{eq:EuLa} is precisely \eqref{eq:diffpotrepeat},
  with $\bar\bc^\tau$ in place of $\bc$, and with $\bar\bq^\tau$ in place of $\bq$,
  i.e.,
  \begin{align}
    \label{eq:diffpotrepeat2}
    \omega(\bar c_1^\tau)\,\bar q_2^\tau - \omega(\bar c_2^\tau)\,\bar q_1^\tau
    = \Delta f(\bar c_1^\tau) + \chi \textstyle{\left(\bar c_1^\tau-\smallhalf\right)}\omega(\bar c_1^\tau)\omega(\bar c_2^\tau).    
  \end{align}
  By the strong convergence~\eqref{eq:cconv} of $\bar c_i^\tau$
  and thanks to the continuity of $\omega$,
  it follows that $\omega(\bar c_i^\tau)$ converges to $\omega(c_i)$ strongly in, say, $L^3(\Omega_T)$.
  In combination with the weak convergence~\eqref{eq:qconv} of the $\bar q_i^\tau$,
  we obtain weak convergence of the products,
  \begin{align*}
    \omega(\bar c_1^\tau)\,\bar q_2^\tau  \rightharpoonup \omega(c_1) q_2 
    \quad \text{and}\quad
    \omega(\bar c_2^\tau)\,\bar q_1^\tau  \rightharpoonup \omega(c_2) q_1
  \end{align*}
  in $L^1(\Omega)$.
  Trivially, also
  \[
    \textstyle{\left(\bar c_1^\tau-\smallhalf\right)}\omega(\bar c_1^\tau)\omega(\bar c_2^\tau)
    \to 
    \textstyle{\left(c_1-\smallhalf\right)}\omega(c_1)\omega(c_2) 
  \]
  strongly in $L^1(\Omega_T)$.
  Finally, weak convergence $\Delta f(\bar c_1^\tau)\rightharpoonup \Delta f(\bar c_1)$ in $L^2(\Omega_T)$
  is implied by \eqref{eq:fconv2}.
  We thus obtain \eqref{eq:diffpotrepeat} as limit of \eqref{eq:diffpotrepeat2}.
\end{proof}
The proof of Theorem \ref{thm:main} is a conclusion of Lemma \ref{lem:wfcont2} and Lemma \ref{lem:easyconv}.


\appendix

\section{}
%
%
\begin{lem}
  \label{lem:mollify}
  There is a constant $K$,
  expressible in terms of $\rho_1/m_1$, $\rho_2/m_2$, and geometric properties of $\Omega$,
  such that for all $\bc\in\spc$:
  \begin{align}
    \label{eq:mollify}
    \dst(\bc,[\bc]_\delta)^2\le K\delta.
  \end{align}
  Consequently, for any $\bc,\bar\bc\in\spc$:
  \begin{align}
    \label{eq:triangle}
    \dst(\bc,\bar\bc)^2
    \le 2\dst\big(\bc,[\bar\bc]_\delta\big)^2+2K\delta.    
  \end{align}
\end{lem}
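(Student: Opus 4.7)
The plan is to estimate each marginal Wasserstein distance $\wass(c_i,[c_i]_\delta)^2$ separately, then combine via the definition of $\dst$. The key observation is that $[c_i]_\delta = (1-\delta)c_i + \delta\rho_i$ is a convex combination of $c_i$ with the constant density $\rho_i$, and both densities have the same total mass $\rho_i|\Omega|$ because $\rho_i$ was defined as the average of $c_i^0$ (hence of $c_i$, preserved on $\spc$). This representation suggests constructing an explicit non-optimal transport plan that moves only the $\delta$-fraction of mass.

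First, I would construct the following transport plan $\gamma$ from $c_i$ to $[c_i]_\delta$: leave the fraction $(1-\delta)c_i$ in place (using the diagonal plan) and use a scaled optimal plan to carry the remaining $\delta c_i$ to $\delta\rho_i$. Formally, if $\gamma_{\opt}^{(i)}\in\Gamma(c_i,\rho_i)$ is optimal, take
\begin{equation*}
  \gamma = (\id,\id)_\#\bigl((1-\delta)c_i\bigr) + \delta\,\gamma_{\opt}^{(i)}.
\end{equation*}
This has the correct marginals, and by the Kantorovich characterization,
\begin{equation*}
  \wass(c_i,[c_i]_\delta)^2 \le \int_{\Omega\times\Omega}|x-y|^2\dd\gamma(x,y) = \delta\,\wass(c_i,\rho_i)^2.
\end{equation*}

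Since $\Omega$ is bounded, the trivial transport bound gives $\wass(c_i,\rho_i)^2\le\operatorname{diam}(\Omega)^2\,\rho_i|\Omega|$. Plugging into the definition of $\dst$,
\begin{equation*}
  \dst(\bc,[\bc]_\delta)^2 = \frac{\wass(c_1,[c_1]_\delta)^2}{m_1}+\frac{\wass(c_2,[c_2]_\delta)^2}{m_2}
  \le \delta\operatorname{diam}(\Omega)^2|\Omega|\left(\frac{\rho_1}{m_1}+\frac{\rho_2}{m_2}\right),
\end{equation*}
which proves \eqref{eq:mollify} with $K$ depending only on the stated parameters.

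For \eqref{eq:triangle}, I would apply the triangle inequality for $\dst$, inherited from $\wass$, to write $\dst(\bc,\bar\bc)\le\dst(\bc,[\bar\bc]_\delta)+\dst([\bar\bc]_\delta,\bar\bc)$, then use $(a+b)^2\le 2a^2+2b^2$ together with the bound just established applied to $\bar\bc$. There is no substantial obstacle here; the only point requiring any care is making sure the mass-preservation $\int[\bar c_i]_\delta\dd x=\int\bar c_i\dd x$ holds so that $[\bar\bc]_\delta\in\spc$ and the Wasserstein distance is well-defined, which is immediate from the definition of $[\cdot]_\delta$ and $\rho_i=\fint\bar c_i\dd x$.
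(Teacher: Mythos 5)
Your proposal is correct and follows essentially the same route as the paper: both arguments keep the fraction $(1-\delta)c_i$ in place via the diagonal plan and transport the remaining $\delta$-fraction of mass onto $\delta\rho_i$, bounding its cost by $\delta\,\operatorname{diam}(\Omega)^2\rho_i|\Omega|$, then conclude \eqref{eq:triangle} by the triangle inequality for $\dst$ and $(a+b)^2\le 2a^2+2b^2$. The only cosmetic difference is that the paper spreads the $\delta$-fraction by the explicit product coupling $\frac{\delta}{|\Omega|}(c_i\lbg_\Omega)\otimes\lbg_\Omega$ instead of a rescaled optimal plan, which yields the same constant $K=\operatorname{diam}(\Omega)^2|\Omega|\left(\frac{\rho_1}{m_1}+\frac{\rho_2}{m_2}\right)$.
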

\begin{proof}
  Define a (sub-optimal) transport plan $\gamma$ from $c_i$ to $[c_i]_\delta=(1-\delta)c_i+\delta\rho_i$
  as follows:
  \begin{align*}
    \gamma = (1-\delta)(\id,\id)\#(c_i\lbg_\Omega) + \frac{\delta}{|\Omega|} (c_i\lbg_\Omega)\otimes\lbg_\Omega.
  \end{align*}
  The marginals are as desired, i.e., for any $\xi,\eta\in C(\Omega)$,
  we have that
  \begin{align*}
    \iint_{\Omega\times\Omega} \xi(x)\dd \gamma(x,y)
    &= (1-\delta)\int_\Omega\xi(x)c_i(x)\dd x + \delta\int_\Omega\xi(x)c_i(x)\dd x \fint_\Omega\dd y
      = \int_\Omega\xi(x)\eta(x)\dd x, \\
    \iint_{\Omega\times\Omega} \eta(y)\dd \gamma(x,y)
    &= (1-\delta)\int_\Omega\eta(y)c_i(y)\dd x + \delta \fint_\Omega c_i(x)\dd x \int_\Omega\eta(y)\dd y
      = \int_\Omega \eta(y)\big[(1-\delta)c_i(y)+\delta\rho_i\big]\dd y.
  \end{align*}
  And the corresponding costs amount to
  \begin{align*}
    \iint_{\Omega\times\Omega}|x-y|^2\dd\gamma(x,y)
    &= \frac\delta{|\Omega|}\iint_{\Omega\times\Omega} |x-y|^2c_i(x)\dd x\dd y \\
    & \le \frac{\delta\,\operatorname{diam}(\Omega)^2}{|\Omega|}\int_\Omega c_i(x)\dd x\int_\Omega\dd y
    =\delta\,\operatorname{diam}(\Omega)^2|\Omega|\rho_i.
  \end{align*}
  In summary,
  \begin{align*}
    \dst(\bc,[\bc]_\delta)^2 = \frac{\wass(c_1,[c_1]_\delta)^2}{m_1} + \frac{\wass(c_2,[c_2]_\delta)^2}{m_2}
    \le \delta\,\operatorname{diam}(\Omega)^2\left(\frac{|\Omega|\rho_1}{m_1}+\frac{|\Omega|\rho_2}{m_2}\right).
  \end{align*}
  The inequality \eqref{eq:triangle} now follows from the triangle inequality,
  that is inherited from $\wass$ to $\dst$,
  \begin{align*}
    \dst(\bc,\bar\bc)^2\le2\dst\big(\bc,[\bar\bc]_\delta\big)^2 + 2\dst\big(\bar\bc,[\bar\bc]_\delta\big)^2,
  \end{align*}
  in combination with \eqref{eq:mollify}.
\end{proof}
\begin{lem}
  \label{lem:fromW2toL2}
  For all $\bc,\bc'\in\spc$ with $c_i,c_i'\in H^1(\Omega)$ and $c_1+c_2\equiv1\equiv c_1'+c_2'$,
  \begin{align}
    \|\bc'-\bc\|_{L^2}^2\le 2\sqrt{m_1}\big(\|\nabla c_1\|_{L^2}+\|\nabla c_1'\|_{L^2}\big)\dst(\bc',\bc).
  \end{align}
\end{lem}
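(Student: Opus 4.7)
My plan is to reduce the desired inequality to a scalar Wasserstein--$L^2$ interpolation bound by using the incompressibility constraint. Since $c_1+c_2\equiv 1\equiv c_1'+c_2'$, we have $c_2-c_2'=-(c_1-c_1')$, so $\|\bc'-\bc\|_{L^2}^2=2\|c_1-c_1'\|_{L^2}^2$. Likewise, from the definition of $\dst$ one has $\wass(c_1,c_1')\le\sqrt{m_1}\,\dst(\bc',\bc)$. Consequently, the lemma reduces to the scalar bound
\begin{align*}
  \|c_1-c_1'\|_{L^2}^2 \le \big(\|\nabla c_1\|_{L^2}+\|\nabla c_1'\|_{L^2}\big)\wass(c_1,c_1').
\end{align*}

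To establish this, the key observation is that $c_1,c_1'\in\spc$ have the same average $\rho_1$, hence $f:=c_1-c_1'$ has zero mean. I would write $\|f\|_{L^2}^2=\int_\Omega f\,(c_1-c_1')\,\dd x$ and control the right-hand side by Wasserstein duality. More precisely, let $(\rho_s,v_s)_{s\in[0,1]}$ be the Benamou--Brenier geodesic between $c_1$ and $c_1'$. Then for any smooth $\xi$, the continuity equation $\partial_s\rho_s+\dv(\rho_sv_s)=0$ gives
\begin{align*}
  \int_\Omega\xi(c_1-c_1')\,\dd x = -\int_0^1\!\!\int_\Omega \nabla\xi\cdot v_s\,\rho_s\,\dd x\,\dd s.
\end{align*}
By Cauchy--Schwarz in $(s,x)$ together with the Benamou--Brenier identity $\int_0^1\!\!\int_\Omega|v_s|^2\rho_s\,\dd x\,\dd s=\wass(c_1,c_1')^2$,
\begin{align*}
  \left|\int_\Omega\xi(c_1-c_1')\,\dd x\right|^2 \le \left(\int_0^1\!\!\int_\Omega|\nabla\xi|^2\rho_s\,\dd x\,\dd s\right)\wass(c_1,c_1')^2.
\end{align*}
The crucial ingredient is the $L^\infty$-bound $\|\rho_s\|_{L^\infty}\le\max(\|c_1\|_{L^\infty},\|c_1'\|_{L^\infty})\le 1$ valid for convex $\Omega$, which is a classical consequence of McCann's displacement convexity of $L^p$-norms, see e.g.~\cite[Ch.~9]{AGS08}. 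Inserting this and choosing $\xi=f\in H^1(\Omega)$ by density yields $\|f\|_{L^2}^2\le\|\nabla f\|_{L^2}\wass(c_1,c_1')$, which combined with $\|\nabla f\|_{L^2}\le\|\nabla c_1\|_{L^2}+\|\nabla c_1'\|_{L^2}$ gives the scalar bound above.

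The main obstacle is the $L^\infty$-bound on the Wasserstein interpolant $\rho_s$: it genuinely relies on the convexity of $\Omega$ and is not something one can avoid. A self-contained derivation proceeds via the Brenier map $T$ with $T_s=(1-s)\id+sT$ and the change of variables $\rho_s=T_s\#c_1$, combined with the arithmetic--geometric inequality $\det(DT_s)^{1/d}\ge(1-s)+s\det(DT)^{1/d}$ for symmetric positive semi-definite $DT_s$, which delivers $\|\rho_s\|_{L^\infty}\le\max(\|c_1\|_{L^\infty},\|c_1'\|_{L^\infty})$; alternatively one may simply quote the cited reference. Once this point is granted, the rest of the argument is bookkeeping.
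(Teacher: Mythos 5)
Your proposal is correct and follows essentially the same route as the paper's proof: reduce to the scalar bound via $c_2-c_2'=-(c_1-c_1')$ and $\wass(c_1,c_1')\le\sqrt{m_1}\,\dst(\bc',\bc)$, use the $L^\infty$-bound (by $1$) on the Wasserstein interpolant between $c_1$ and $c_1'$, and then test with (an approximation of) $c_1'-c_1$. The paper merely phrases the interpolation step in Lagrangian form, writing $\zeta\circ T_1-\zeta=\int_0^1\nabla\zeta\circ T_s\cdot\nabla\varphi_1\,\dd s$ with $T_s=\id-s\nabla\varphi_1$ and applying Cauchy--Schwarz together with $\sup_\Omega T_s\#c_1\le 1$, which is the same computation as your Benamou--Brenier/continuity-equation argument in Eulerian coordinates.
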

\begin{proof}
  Let $(\varphi_1,\psi_1)$ be a pair of Kantorovich potentials for the optimal transport from $c_1$ to $c_1'$.
  For each $s\in[0,1]$, define $T_s:\Omega\to\Omega$ by $T_s(x) = x-s\nabla\varphi_1(x)$.
  For any test function $\zeta\in C^1(\overline\Omega)$,
  \begin{align*}
    \int_\Omega [c_1'-c_1]\zeta\dd x
    &= \int_\Omega \big[\zeta\circ T_1-\zeta\big]c_1\dd x \\
    &= \int_\Omega \left[\int_0^1 \nabla\zeta\circ T_s\cdot\nabla\varphi\dd s\right]\,c_1\dd x \\
    &\le\int_0^1\left(\int_\Omega|\nabla\zeta|^2\circ T_s\,c_1\dd x\right)^{1/2}
      \left(\int_\Omega|\nabla\varphi_1|^2c_1\dd x\right)^{1/2}\dd s \\
    &= \int_0^1\left(\int_\Omega|\nabla\zeta|^2T_s\#c_1\dd x\right)^{1/2}\dd s\, \wass(c_1,c_1').
  \end{align*}
  Using the fact that
  \begin{align*}
    \sup_\Omega T_s\#c_1 \le \max\left(\sup_\Omega c_1,\sup_\Omega c_1'\right) = 1,
  \end{align*}
  it follows that
  \begin{align*}
    \int_\Omega \big|[c_1'-c_1]\zeta\big|\dd x\le \|\nabla\zeta\|_{L^2}\wass(c_1,c_1'),
  \end{align*}
  and consequently --- using for $\zeta$ approximations of $c_1'-c_1$ in $C^1$ ---
  \begin{align*}
    \|c_1'-c_1\|_{L^2}^2\le \|\nabla(c_1'-c_1)\|_{L^2}\wass(c_1,c_1')
    \le \big(\|\nabla c_1'\|_{L^2}+\|\nabla c_1\|_{L^2}\big)\wass(c_1,c_1').
  \end{align*}
  By hypothesis, $c_1-c_1'=c_2'-c_2$.
  Thus, recalling the definition of $\dst$,
  we obtain
  \begin{align*}
    \|\bc'-\bc\|_{L^2}^2
    &=  2\|c_1'-c_1\|_{L^2}^2 \\
    &\le 2\sqrt{m_1}\big(\|\nabla c_1\|_{L^2}+\|\nabla c_1'\|_{L^2}\big)^2
      \left(\frac{\wass(c_1',c_1)^2}{m_1}\right)^{1/2} \\
    &\le 2\sqrt{m_1}\big(\|\nabla c_1\|_{L^2}+\|\nabla c_1'\|_{L^2}\big)\dst(\bc',\bc).
  \end{align*}
\end{proof}
%

\bibliographystyle{plain}
\bibliography{biblio}

\end{document}